\documentclass[11pt,a4wide]{article}

\usepackage{multirow}
\usepackage{algorithmic}
\usepackage{amsmath}
\usepackage{amsfonts}
\usepackage{amsthm}
\usepackage[english]{babel} 
\usepackage{dsfont}
\usepackage{amssymb}
\usepackage{graphicx}        
\usepackage{cite}                            
\usepackage{subcaption}
\usepackage{epstopdf}
\usepackage{epsfig}
\usepackage{subfloat}
\usepackage{float}
\usepackage[thinlines]{easytable}
\usepackage{array}
\usepackage{mathtools}
\usepackage{stmaryrd}
\usepackage{bm}
\usepackage{hyperref}
\hypersetup{
    colorlinks=true,
    linkcolor=blue,
    filecolor=magenta,      
    urlcolor=cyan,
    pdftitle={Overleaf Example},
    pdfpagemode=FullScreen,
    }
\def\dist{\textup{dist}}

\usepackage{xcolor,colortbl}
\definecolor{g}{rgb}{0.68,1,0.18}

\usepackage{booktabs}
\usepackage{url}
\usepackage[font=small,labelfont=bf,textfont=it,indention=.1cm,width=1.1\textwidth]{caption}


\usepackage[capitalise]{cleveref}
\usepackage{comment}

\allowdisplaybreaks
\numberwithin{equation}{section}

\newtheorem{remark}{Remark}[section]
\newtheorem{definition}{Definition}[section]
\newtheorem{lemma}{Lemma}[section]
\newtheorem{theorem}{Theorem}[section]
\newtheorem{corollary}{Corollary}[section]

\usepackage[linesnumbered,algoruled,boxed,lined]{algorithm2e}

\def\RR{\mathbb R}

\def\ve{\varepsilon}

\def\be{\begin{equation}}
\def\ee{\end{equation}}
\def\bea{\begin{eqnarray}}
\def\eea{\end{eqnarray}}

\newtheorem{assumption}{Assumption}[section]

\setlength{\textwidth}{16cm}
\setlength{\oddsidemargin}{.5cm}
\setlength{\evensidemargin}{.5cm}
\setlength{\textheight}{21cm}
\setlength{\topmargin}{-.5cm}

\title{An adaptive consensus based method for multi-objective optimization with uniform Pareto front approximation}

\author{Giacomo Borghi\footnote{RWTH Aachen University, Institute for Geometry and Applied Mathematics, Aachen, Germany (borghi@eddy.rwth-aachen.de, herty@igpm.rwth-aachen.de)}
\and Michael Herty \footnotemark[1] 
\and Lorenzo Pareschi\footnote{University of Ferrara, Department of Mathematics and Computer Science \& Center for Modelling Computing and Statistics, Ferrara, Italy (lorenzo.pareschi@unife.it)}}




\begin{document}
\maketitle

\begin{abstract} 
In this work we are interested in stochastic particle methods for  multi-objective optimization. The problem is formulated using parametrized, single-objective sub-problems which are solved simultaneously. To this end  a consensus based multi-objective optimization method on the search space combined with an additional heuristic strategy to adapt parameters during the computations is proposed. The adaptive strategy aims to distribute  the particles uniformly over the image space by using energy-based measures to quantify the diversity of the system. The resulting metaheuristic algorithm is mathematically analyzed using a mean-field approximation and convergence guarantees towards optimal points is rigorously proven. In addition, a gradient flow structure in the parameter space for the adaptive method is revealed and analyzed.  Several numerical experiments shows the validity of the proposed stochastic particle dynamics and illustrate the theoretical findings.
\end{abstract}

{\bf Keywords}: stochastic particle methods, consensus-based optimization, multi-objective optimization, gradient-free methods, mean-field limit  
\smallskip

{\bf AMS subject classification}: 35Q70, 35Q84, 35Q93, 90C29, 90C56

\tableofcontents

\section{Introduction}
Motivated by the problem in which two or more objectives must be considered at the same time, even though they may conflict with each other, in this work we are interested the design of stochastic algorithms for multi-objective optimization. This type of problem is commonly found in everyday life, for example, in physics, engineering, social sciences, economy, biology, and many others^^>\cite{deb2001multi,Hwang79,Pardalos2018,jahn2004vector,eichfelder2021twenty}. Investing in the financial market while maximizing profit and minimizing risk or building a vehicle while maximizing performance and minimizing fuel consumption and pollutant emissions are examples of multi-objective optimization problems.

From a mathematical viewpoint the problem can be formulated through a variable $x\in \RR^d$ describing a possible decision and assuming that $g_i(x)$ is the $i$-th objective for $i=1, \dots, m$, with $m \in \mathbb{N}$ being the total number of objectives. A multi--objective problem then requires to solve for a decision $x$ 
\begin{equation}
\min_{x \in \RR^d} g(x)
\label{eq:mop}
\end{equation}
where $g(x) = (g_1(x) , \dots, g_m(x))^\top$. A solution to \eqref{eq:mop} corresponds to several optimal decisions. Here, we consider optimality in the sense of Pareto \cite{jahn2004vector}, i.e.,  no objective  can  be improved  without necessarily degrade another objective. Without additional information about subjective preferences, there may be a (possibly infinite) number of Pareto optimal solutions, all of which are considered equally good. 
Therefore, the optimization tasks consist of providing  a set of optimal decisions. To this end, it is also desirable to have a \textit{diverse} set, that is, addressing the problem not only by optimizing fitness, but also by aiming to cover a variety of user-defined features of interest, in order to  best describe the (possibly) broad set  of optimal decisions.

Several methods have been proposed to numerically solve \eqref{eq:mop} and, as for single-objective optimization, they typically belong to either the class of metaheuristic algorithms or mathematical programming methods \cite{Sergeyev2018}.
Among metaheuristics \cite{talbi2009meta}, multi-objective evolutionary algorithms \cite{deb2001multi}, such as NSGA-II \cite{deb2002nsga2} and MOEA/D \cite{zhang2008moead}, have gained popularity among practitioners due to their flexibility and ease of use. At the same time, they usually lack of convergence analysis compared to mathematical programming methods. For more details on mathematical programming methods and evolutionary algorithms in multi-objective optimization   we refer to the recent surveys \cite{eichfelder2021twenty, coello2020survey}. 

We are interested in a particular class of stochastic particle optimization methods, called consensus-based optimization (CBO), which has recently gained popularity due to the use of mean-field techniques that can provide them with a rigorous mathematical foundation. Such methods consider interacting particle systems described by stochastic differential equations (SDEs) that combine a drift towards the estimated minimum and random exploration of the search space^^>\cite{pinnau2017consensus,carrillo2018analytical,fornasier2021consensusbased,carrillo2019consensus,fornasier2022aniso,benfenati2021binary,totzeck2020}. These approaches have been extended also to optimization problems over hypersurfaces \cite{fhps20-2,fhps20-2b,fornasier2020hypersurfaces}, constrained optimization  \cite{borghi2021constrained,carrillo2021constrained} and multi-objective optimization \cite{borghi2022multi}. From a mathematical viewpoint, this class of metaheuristic methods is inspired by the corresponding mean-field dynamics based on particle swarming and multi-agent social interactions, which have been widely used to study complex systems in life sciences, social sciences and economics \cite{pareschi13,MR3274797,defrli13,Prigogine1977self,Vicseck}. These techniques have proven fruitful to demonstrate convergence towards a global minimum for single-objective problems, not only in the case of CBO methods, but also for the popular Particle Swarm Optimization (PSO) algorithm \cite{grassi2021from,huang2022PSO}, thus paving the way to provide a mathematical foundation for other metaheuristics.

In the same spirit, the authors proposed in \cite{borghi2022multi} a multi-objective optimization algorithm (M-CBO) by prescribing a CBO-type dynamics among several particles making use of a scalarization strategy. Scalarization strategies are a common tool in multi-objective optimization \cite{jahn2004vector} as they allow to translate problem \eqref{eq:mop} into a set of parametrized single-objective problems, which can be solved simultaneously in the case of particle-based optimization methods.  
In this paper, we provide a convergence analysis for the method based on the mean-field description of the M-CBO dynamics. Furthermore, we improve the method in order to capture with uniform accuracy the shape of the Pareto front. This is done by  iteratively updating parameters of the method to minimize  specific diversity measures. Mathematically, this last feature is achieved by enlarging the phase space of the particles. A detailed analysis of the extended model is also presented  by studying a mean-field approximation of the particle dynamics which allows to recover convergence guarantees towards optimal points and also underline a gradient-flow structure in the space of the parameters. 

Recently, energy-based diversity measures have gained popularity in the multi-objective evolutionary optimization community due to their flexibility, scalability and theoretical properties \cite{coello2020survey}. In this formulation, a set of decision is diverse if it corresponds to a minimal configuration of a suitable two-body energy potential, breaking the problem down into finding such configurations. In the  proposed algorithm, we obtain this by  inserting a Vlasov-type dynamics in the space of the parameters.  We prove that the particle dynamics can be written in the more general framework of non-local interaction equations over bounded domains. The later topic has been recently investigated e.g. in  \cite{carrillo2016nonlocal,fatecau2017swarm,fatecau2019diffusion,patacchini2022nonlocal}.

The rest of the paper is organized as follows. In \cref{sec:2} we formally introduce the concept of optimality for \eqref{eq:mop} and present the scalarization strategy. Next,
in \cref{sec:3} we illustrate the particle dynamics both in the search space and in the space of parameters. \cref{sec:4} is devoted to the mathematical analysis of the system evolution using a mean-field description. Finally, in \cref{sec:5}
numerical examples on convex and non-convex as well as disjoint Pareto fronts are presented which confirm the theoretical results as well as the performance of the new method. Some concluding remarks are discussed in the last section.

\section{Problem definition and scalarization}
\label{sec:2}

We will use the following notation. Let $a \in \RR^n$, $|a|$ indicates its euclidean norm and $(a)_l$ its $l$-th component, while for $A$ Borel set $A \subset \RR^n$, $|A|$ indicates its Lebesgue measure. The symbols $\prec$ and $\preceq$ indicates the partial ordering with respect to the cone $\RR^m_{>0}$ and $\RR^m_{\geq 0}$ respectively.

\subsection{Pareto optimality and diversity}

When dealing with a vector-valued objective function $g: \RR^d \to \RR^m$
\begin{equation}
g(x)  = \left (g_1(x), \cdots, g_m(x) \right)
\end{equation}
with $m\geq 2$, the interpretation of the minimization problem \eqref{eq:mop} is not unique, as the image space $\RR^m$ is not fully ordered. We consider the notions of strong and weak Edgeworth-Pareto optimality which rely on the natural, component-wise, partial ordering on $\RR^m$ \cite{jahn2004vector}.

\begin{definition}[Edgeworth-Pareto optimality.] 
A point $\bar x \in \mathbb{R}^d$ is (strong) Edgeworth-Pareto (EP) optimal, or simply optimal, if $g(\bar x)$ is a minimal element of the image set $g(\mathbb{R}^d)$ with respect to the natural partial ordering, that is if there is no $x \in \mathbb{R}^d$ such that
\[ g_i(x) \leq g_i(\bar x) \;\; \textup{for all}\;\; i=1, \dots, m\,, \quad g(x) \neq g(\bar x)\,.\]
Alike, $\bar x$ is weakly EP optimal, if there is no $x \in \mathbb{R}^d$ such that  
\[ g_i(x) < g_i(\bar x)\; \; \textup{for all}\;\; i=1, \dots, m\,.
\]

\noindent The set 
$
F_x = \{ \bar x \in \RR^d \, |\, \bar x \; \textup{is EP optimal} \}
$
constitutes the set of optimal EP points, while
\[
F = \{ g(\bar x) \in \RR^m \, |\, \bar x \; \textup{is EP optimal} \}
\]
is  the Pareto front.
\end{definition}

The multi-objective optimization problem \eqref{eq:mop} consists of finding the set of EP optimal points. Unlike single-objective problems, the set is typically uncountable and the optimization task  involves finding a finite subset of optimal points. Those should ideally cover $F$ and  the concept of \textit{diversity} is introduced to distinguish between two approximations \cite{deb2001multi}. Intuitively, if  points on the Pareto front are more distanced, the diversity is higher.  In view of the minimization problem, having a diverse approximation is desirable as it provides at the same cost  a broader variety of possible solutions. 

The most diverse approximation possible is possibly given by a set of point which is uniformly distributed over the Pareto front.  Quantifying the diversity of an optimal set is of paramount importance both, to assess the performance of optimization methods and to design them. Indeed, oftentimes the heuristic of a specific method is constructed to specifically minimize, or maximize, a specific measure \cite{coello2020survey}. 
Without knowledge of the exact Pareto front, popular diversity measures are given by \textit{hypervolume contribution} \cite{zitzler1998multi}, \textit{crowding distance} \cite{deb2002nsga2} and, recently, by the \textit{Riesz s-energy} \cite{coello2021overview, vega2021towards}. Our proposed algorithm will aim to minimize the latter (or similar energy-based measures) as it can be embedded in a mean-field framework. The exact definitions are introduced later.

To sum up, the multi-objective optimization problem we consider is a two-objective task itself, as one needs to find a set of points which are both EP optimal and optimize a suitable diversity measure.

\subsection{Scalarization strategy}

A popular way to approach \eqref{eq:mop} is to use a scalarization strategy \cite{jahn2004vector,book2005mop} which reduces the multi-objective problem to a (finite) number of single-objective sub-problems. Among the possible scalarization strategies, we consider the approximation sub-problems with weighted Chebyschev semi-norms \cite{jahn2004vector} where the single objectives are given by
\be
G (x,w) := \max_{k\in \{ 1, \dots, m\}}\, w_k  \,|g_k(x)|\,. \notag
\ee
and are parametrized by a vector of weights $w$ which belongs to the unitary, or probability, simplex 
\be
\Omega :=\left \{ w \in \mathbb{R}^m_{\geq0}\; | \;\sum_{i=1}^m w_i = 1 \right\}\,.
\notag
\ee
For each $w \in \Omega$ the subproblems then read
\be
\min_{x \in \mathbb{R}^d} G(x, w).
\label{eq:sub}
\ee

The link  between the scalarized problems and the original multi-objective problem is given by the following result.
\begin{theorem}[{\cite[Corollaries 5.25, 11.21]{jahn2004vector}}]
Assume $g$ is component-wise positive.
\begin{enumerate}
\item[a)] A point $\bar x$ is weakly EP optimal if and only if $\bar x$ is a solution to \eqref{eq:sub} for some $w \in \Omega$.
\item[b)] Assume all sub-problems \eqref{eq:sub} attains an unique minimum. Then, $\bar x$ is EP optimal if and only if $\bar x$ is the solution to \eqref{eq:sub} for some $w \in \Omega$.
\end{enumerate}
\label{t:pareto}
\end{theorem}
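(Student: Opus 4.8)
The plan is to exploit the component-wise positivity of $g$, which lets me drop the absolute values and write $G(x,w) = \max_{k} w_k g_k(x)$ throughout. I would prove the two equivalences by treating the \emph{sufficiency} directions (a solution of a subproblem is optimal) and the \emph{necessity} directions (an optimal point solves some subproblem) separately, and then observe that part b) follows from part a) once the uniqueness hypothesis is added.

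For the sufficiency direction of a), suppose $\bar x$ solves \eqref{eq:sub} for some $w \in \Omega$ but is not weakly EP optimal, so that there is $x$ with $g_i(x) < g_i(\bar x)$ for every $i$. Since $w \in \Omega$ has $\sum_i w_i = 1$, at least one weight is positive, so $G(\bar x, w) > 0$; moreover for every index $i$ we have $w_i g_i(x) < w_i g_i(\bar x) \le G(\bar x,w)$ when $w_i>0$, and $w_i g_i(x) = 0 < G(\bar x,w)$ when $w_i=0$. Taking the maximum over $i$ gives $G(x,w) < G(\bar x,w)$, contradicting the minimality of $\bar x$. For the sufficiency direction of b) I would argue similarly: if $\bar x$ is the unique solution for some $w$ but not EP optimal, pick $x$ with $g_i(x)\le g_i(\bar x)$ for all $i$ and $g(x)\neq g(\bar x)$; then $w_i g_i(x)\le w_i g_i(\bar x)$ for all $i$, hence $G(x,w)\le G(\bar x,w)$, so $x$ is also a minimizer, and uniqueness forces $x=\bar x$, contradicting $g(x)\neq g(\bar x)$.

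The necessity direction of a) is the crux, and it is where the positivity of $g$ is essential. Given $\bar x$ weakly EP optimal, I would choose the explicit weights
\[
w_k = \frac{1/g_k(\bar x)}{\sum_{j=1}^m 1/g_j(\bar x)}, \qquad k=1,\dots,m,
\]
which are well defined and strictly positive precisely because each $g_j(\bar x)>0$, and which satisfy $w\in\Omega$. With this choice $w_k g_k(\bar x)$ equals the same constant $c:=\big(\sum_j 1/g_j(\bar x)\big)^{-1}$ for every $k$, so $G(\bar x, w)=c$. If $\bar x$ did not minimize $G(\cdot,w)$ there would exist $x$ with $G(x,w)<c$, i.e. $w_k g_k(x)<c$ for all $k$, i.e. $g_k(x)<c/w_k=g_k(\bar x)$ for all $k$; this strict domination contradicts weak EP optimality. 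Hence $\bar x$ solves \eqref{eq:sub} for this $w$. The necessity direction of b) then follows immediately: an EP optimal point is in particular weakly EP optimal, so by the previous argument it solves some subproblem, and under the uniqueness hypothesis it must coincide with the unique solution of that subproblem.

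The only genuinely nontrivial step is the weight construction in the necessity part of a): the idea is to equalize the scaled components $w_k g_k(\bar x)$ so that any strict decrease of the Chebyshev value can only come from a simultaneous strict decrease of all objectives. The positivity assumption is exactly what guarantees these weights exist in $\Omega$ and that the reference level may be taken at the origin; without it (e.g. if some $g_k(\bar x)=0$) one would have to place the reference point more carefully. All remaining steps are elementary manipulations of the $\max$ together with $\sum_i w_i=1$, and no compactness or attainment assumptions are needed beyond those already present in the statement.
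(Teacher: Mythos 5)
Your proof is correct. Note that the paper does not prove this statement at all: it is imported verbatim from Jahn's book (Corollaries 5.25 and 11.21), so there is no in-paper argument to compare against. Your self-contained proof is the standard one for weighted Chebyshev scalarization with reference point at the origin: the sufficiency directions are the elementary $\max$ manipulations you give, and the key necessity step---choosing $w_k \propto 1/g_k(\bar x)$ so that all scaled components $w_k g_k(\bar x)$ are equalized, whence any strict decrease of $G(\cdot,w)$ forces a simultaneous strict decrease of every objective---is exactly the mechanism underlying the cited corollaries. Your closing remark correctly identifies where the component-wise positivity hypothesis enters (well-definedness and positivity of the weights, and the legitimacy of taking the reference point at $0$), and the reduction of part b) to part a) via ``EP optimal $\Rightarrow$ weakly EP optimal'' plus the uniqueness hypothesis is sound. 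No gaps.
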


\cref{t:pareto} shows the strength of the Chebyschev scalarization strategy which  allows to find all the weakly EP optimal points, contrary to other strategies like linear scalarization \cite{jahn2004vector}. We remark that the proposed algorithm can also be applied to solve any other scalarized problems of the form \eqref{eq:sub} where the parameters are take from the unitary simplex $\Omega$.

Even though solving $N$ sub-problems with corresponding weights vectors $\{ W^i\}_{i=1}^N \subset \Omega$ ensures to find $N$ optimal points, we note that there is no guarantee to obtain a diverse approximation. Therefore, scalarization targets only one of the two objectives of the problem, without addressing the diversity of the solution. In the following, we introduce an algorithm where the parameters $W^i$ are dynamically changed during the computation to obtain a set of EP points which is also diverse.

\section{Adaptive multi-objective consensus based optimization}
\label{sec:3}
We propose a dynamics where $N\in \mathbb{N}$ particles interact with each other to solve $N$ scalar sub-problems given in the form \eqref{eq:sub}. We introduce  the dynamics as a continuous-in-time process and leave the definition of the actual discrete optimization method to \cref{sec:5}. 

At a time $t\geq0$, every particle is described by its position $X_t^i \in \RR^d$ and its vector of weights $W_t^i \in \Omega$ which determines the optimization sub-problem the particle aims to solve. As a result, particles are described by $N$ tuples
\begin{align*}
(X_t^i, W_t^i)  \quad \textup{for}\quad i= 1, \dots, N\, \quad \textup{for all} \quad  t>0.
\end{align*} 
in the augmented space $\RR^d \times \Omega$.

The initial configuration  is generated by sampling the positions $X^i_0$ from a common distribution $\rho_0 \in \mathcal{P}(\RR^d)$ and by taking uniformly distributed weights vectors $W^i_0$ over $\Omega$. 
The dynamics is prescribed to solve the multi-objective optimization task. We recall that \eqref{eq:mop} not only requires to find optimal points, but also points that are diverse, that is, well-distributed over the Pareto front. To this end, the optimization process is made of two mechanisms which address these two objectives separately. 

\subsection{A consensus based particle dynamics in the search space}

The first mechanism prescribes the  update of the position $\{X_t^i\}_{i=1}^N$, such that they converge towards EP optimal points. As in \cite{borghi2022multi}, this is done by introducing a CBO-type dynamics between the particles. To illustrate the CBO update rule, let us consider for the moment a fixed single-objective sub-problem parametrized by $w \in \Omega$. Similar to Particle-Swarm Optimization methods, in CBO dynamics at time $t>0$, the particles instantaneously move towards an attracting point $Y_t^\alpha$ which is given by a weighted average of their position:
\be
Y^\alpha_t(w) = \frac{ \sum_{j=1}^N   X^j_t\, \exp\left(-\alpha G(X^j_t, w)\right)}{\sum_{j=1}^N \exp\left(-\alpha G(X^j_t, w)\right)}.
\label{eq:Ya}
\ee

Due to the coefficients used in \eqref{eq:Ya}, if $\alpha\gg1$, $Y^\alpha_t(w)$ is closer to the particles with low values of the objective function $G(\cdot, w)$ and, in the limiting case, it holds
\be \notag
Y^\alpha_t(w) \longrightarrow \underset{X^j_t,\,j=1,\dots, N}{\textup{argmin}} G(X_t^j, w)\quad \textup{as} \quad \alpha \to \infty\,
\ee
if the above minimum uniquely exists. This promotes the concentration of the particles in areas of the search space where the objective function $G(\cdot,w)$ attains low values and hence, more likely, a global minimum. We remark that the exponential coefficients correspond to the Gibbs distribution associated with the objective function and, moreover, that this choice is justified by the Laplace principle \cite{Dembo2010}. The later is  an essential result to study the convergence of CBO methods \cite{fornasier2021consensusbased} and it states that  for any absolutely continuous probability density $ \rho \in \mathcal{P}(\RR^d)$ we have
\be\notag
\lim_{\alpha \to \infty}\left( - \frac 1 \alpha \log \left( \int  e^{-\alpha G(x, w)} d\rho (x) \right) \right) = \inf_{x \in \text{supp}(\rho)} G(x, w)\,.
\label{eq:laplace}
\ee

Since in the multi-objective optimization dynamics each particle addresses a different sub-problem, each of them moves towards a different attracting point given by $Y^\alpha_t(W^i_t)$. 
The drift strength is given by $\lambda >0$, while another parameter $\sigma>0$ determines the strength of an additional stochastic component. 

The time evolution of the particles positions is determined by a system of SDE
\be
dX^{i}_{t} = \lambda\left(Y_t^\alpha(W^i_t) - X_t^i \right)dt +  \sigma D^i_t dB_t^i \quad \textup{for all} \quad  i = 1, \dots, N \, ,
\label{eq:sdex}
\ee 
where $B_t^i$ are $d$-dimensional independent Brownian processes. The matrices $D_t^i$ characterize the random exploration process  which might be isotropic \cite{pinnau2017consensus}
\be
D_{t,\text{iso}}^i =  | X_t^i - Y^\alpha_t(W^i_t)|\, I_d\,,
\label{eq:iso}
\ee
$I_d$ being the $d$-dimensional identity matrix, or anisotropic \cite{carrillo2019consensus}
\be
D_{t,\text{aniso}}^i =  \text{diag} \left( (X_t^i - Y^\alpha_t(W^i_t))_1 , \dots ,(X_t^i - Y^\alpha_t(W^i_t))_d  \right)\,.
\label{eq:aniso}
\ee
Both explorations depend on the distance between $X^i_t$ and the correspondent attracting point making the stochastic component larger if the particle is far from $Y_t^\alpha(W^i_t)$. The difference lays on the direction of the random component: while in the isotropic exploration all dimensions are equally explored, the anisotropic one explores each dimension with a different magnitude.

The expected outcome of the position update rule \eqref{eq:sdex} is that every particle will find a minimizer of a sub-problem and hence, by \cref{t:pareto}, a weak EP optimal point.  We have already mentioned that if the weights vectors are fixed to the initial, uniform, distribution $\{W_0^i \}_{i=1}^N$, that is
\[ \frac{d W_t^i}{dt} = 0 \quad \textup{for all} \quad  i = 1, \dots, N \, , \] 
there is no guarantee to obtain equidistant points on the front. Since it is impossible to determine beforehand the optimal distribution on $\Omega$, we propose a heuristic strategy which updates the vector weights  promoting diversity. 

\subsection{Uniform approximation of the Pareto front}

A popular diversity metric in multi-objective optimization is the \textit{hypervolume} contribution metric \cite{zitzler1998multi}, which has the drawbacks of being computationally expensive \cite{fonseca2009hyp} and, by definition, dependent on an estimate of $g$. Motivated by this and by the objective of designing algorithms which perform well for any shape of the Pareto front \cite{vega2021towards}, new energy-based diversity measures have recently gained popularity \cite{coello2020survey,coello2021overview}. Such measures quantify the diversity of a given  empirical distribution  $\rho^N \in \mathcal{P}(\RR^d)$ by considering the pairwise interaction given by a two-body potential $U: \RR^m \to (-\infty, \infty]$ on the image space
\be
\mathcal{U}[ g\# \rho^N ] :=  \iint U\left (g(x) - g(y) \right)\, d\rho^N(y)\,  d\rho^N(x)\,,
\label{eq:U}
\ee
$g \# \rho^N$ being the push-forward measure of $\rho^N$.

The problem of finding well-spread points over the Pareto front is then equivalent to finding a configuration which is minimal with respect to the given energy
 $\mathcal{U}$ where we recall that $F_x$ is the set of EP optimal points:
 
$$ \underset{\rho^N \in \mathcal{P}(F_x)}{\textup{min}}\; \mathcal{U}\, [ g \# \rho^N] \to \min.$$
A distribution $\nu^N$ is called diverse, if and only if 
\begin{equation*}
\nu^N \in \underset{\rho^N \in \mathcal{P}(F_x)}{\textup{argmin}}\; \mathcal{U}\, [ g \# \rho^N]\,.
\end{equation*}

Any energy $ \mathcal{U}$ describing short range repulsion between particles, like Monge energy or repulsive-attractive power-law energy, is in principle a  candidate to be a diversity measure. The Riesz $s$-energy given by
\be
U_R(z) = \frac{1}{|z|^s} \quad \textup{with} \quad s = m-1
\label{eq:riesz}
\ee
is a popular choice \cite{coello2021overview} due to its theoretically guarantees of being a good measure of the uniformity of points. Indeed, if $F$ is a $(m-1)$-dimensional manifold, the minimal energy configuration $\nu^N$ converges to the uniform Hausdorff distribution over $F$ as $N\to \infty$. We refer to \cite{hardin2005minimal} for the precise statements of the result and more details.  
Inspired by the electrostatic potential between charged particles, the authors in \cite{braun2015preference} used a Newtonian potential which is also empirically proven to be a suitable diversity measure \cite{braun2015preference,Braun2018thesis}. See \cite{coello2021overview} for a numerical comparison between two-body potentials as diversity measures in evolutionary algorithms. 
We will also compare different energies in \cref{sec:5} and consider $U\in \mathcal{C}^1(\RR^m \setminus \{ 0\})$ to be any of the above.
Exact computation of minimal energy configurations of a system of $N$ particles is a well-studied problem  as it is connected to e.g. crystallization phenomenon \cite{blanc2015crystal}. We note that, in our settings, the configuration $\rho^N$ is additionally mapped to the image space in \eqref{eq:U}, making the task even harder. Therefore, we propose an heuristic strategy that  is expected to find only suboptimal configurations.

To promote diversity, we let the particles  follow a vector field associated with $\mathcal{U}$. The movement will be only in parameter space  $\{ W_t^i\}_{i=1}^N$ in order not interfere with the CBO optimization dynamics acting on the positions $\{X_t^i\}_{i=1}^N$. 
Intuitively, if two particles are close to each other in the image space $g(\RR^d)$, their weights vectors are pulled apart. This resemble a short range repulsion of $U$. To ensure $W_t^i$ remains in the unitary simplex $\Omega$, a projection to the tangent cone $T(W_t^i,\Omega)$ 
\begin{equation*}
P_{W_t^i} (h):= P_{T(W_t^i, \Omega)} (h) = \left\{ z \in T(W_t^i,\Omega) \,:\, |z - h| = \inf_{\xi \in T(W_t^i,\Omega)} | \xi - W_t^i|\right \}  
\end{equation*}
for all $h \in \RR^m$ is required, see also \cite{carrillo2016nonlocal} for more details. A parameter $\tau\geq0$ determines the time scale of the weights adaptation process with respect to the CBO dynamics \eqref{eq:iterx}. The process can be turned off for  $\tau =0$.

In case of bi-objective problems, where $m=2$, we therefore obtain a Vlasov-type dynamics
\begin{equation}
\frac{dW^{i}_{t}}{dt} =  -  P_{T(W^i_t,\Omega)}\left ( -\frac{\tau}N \sum_{j=1}^N\nabla U\left(g(X_t^i) - g(X_t^j)\right) \right) \quad \textup{for all} \quad  i = 1, \dots, N \, ,
\label{eq:sdew}
\end{equation}
which is well-defined as the parameters space is embedded in the image space $\RR^m$. If $U$ has singularity in $0$, we set $\nabla U(0) = 0$.  We note that the additional minus sign in \eqref{eq:sdew}, is due to explicit form of the relation determined by \cref{t:pareto} between the Pareto front and $\Omega$. This will become clear in the next section, as this choice gives a gradient flow structure to the parameters dynamics.

For $m> 2$, the relation between a weight vector $w \in \Omega$ and the correspondent (weakly) EP optimal point is more involved. Nevertheless, we  prescribe a suitable heuristic dynamics as follows: let $U$ be given as
\begin{equation*}
U (z) = r(|z|) \quad \textup{for some}\quad r \in \mathcal{C}^1(\RR_{\geq0}),
\end{equation*}
then the parameters dynamics reads
\be
W^{i}_{t} =   P_{T(W^i_t,\Omega)} \left ( - \frac \tau {N}\sum_{j=1}^N \frac{W_t^i - W_t^j}{|W_t^i - W_t^j|}\; r' \left(|g(X_k^i) - g(X_k^j)|\right)\right)
\label{eq:iterw}
\ee
for all $i = 1, \dots, N$. The term $r'(\cdot)$ determines the strength and the sign of the interaction , while $(W_t^i - W_t^j)/|W_t^i - W_t^j|$ the direction of movement. As before, the projection step is needed due to the boundedness of $\Omega$. Even though \eqref{eq:iterw} can also be used when $m=2$, we will consider in the next section \eqref{eq:sdew} only.

Up to our knowledge, energy-based diversity metrics have only been used a selection criterion between candidate approximation of the Pareto front \cite{vega2021towards,coello2020reference}, and this is the first time the vector field associated to $\mathcal{U}$ is used to guide the particle dynamics in a metaheuristic multi-objective optimization method.

\section{Mean-field analysis of the particle dynamics}
\label{sec:4}
In this section, we give a statistical description of the optimization dynamics by presenting the corresponding mean-field model, which  allows us to analyze the convergence of the method towards a solution to the multi-objective optimization problem. We restrict ourselves to the case where $m=2$ and the dynamics in $\Omega$ is given by \eqref{eq:sdew}. The particle dynamics is given by \eqref{eq:iterx} and \eqref{eq:iterw2}, respectively.  

Similar to \cite{pinnau2017consensus}, we formally derive the mean-field equation of the large system \eqref{eq:sdex}, \eqref{eq:sdew} by making the so-called \textit{propagation of chaos} assumption on the marginals. In particular, let $F^N(t)$ be the particles probability distribution over $(\RR^d \times \Omega)^N$ at a time $t \geq0$. We assume that $F^N(t) \approx f(t)^{\otimes N}$ that is, that the particles $(X_t^i, W_t^i), i=1, \dots,N$ are independently distributed according to $f(t) \in \mathcal{P}(\RR^d \times \Omega)$ for some large $N\gg 1$.

In the following, we indicate with $\rho(t) \in \mathcal{P}(\RR^d)$ the first marginal of $f(t)$ and with $\mu(t) \in \mathcal{P}(\Omega)$ the second marginal on the parameters space $\Omega$. As a consequence of the propagation of chaos assumption, we obtain that 
\[
Y_t^\alpha(W^i_t) =\frac{\frac1N\sum_{i=1}^N X_t^i e^{-\alpha G(X_t^i,W^i_t)}}{\frac1N\sum_{i=1}^N e^{-\alpha G(X_t^i,W^i_t)}} \; \approx\;  \frac{\int x e^{-\alpha G(x, W^i_t)} d\rho(t)}{\int e^{-\alpha G(x, W^i_t)}d\rho(t)} =: y^\alpha (\rho(t), W^i_t) 
\]
and that
\[
\sum_{i=1}^N \nabla U \left( g(X_t^i) - g(X_t^j) \right) \approx \int \nabla U \left(g(X_t^i) - g(x) \right) d\rho(t)\,.
\]
The dynamics \eqref{eq:sdex}, \eqref{eq:sdew} is  now independent on the index $i$ and we obtain the  process $(X_t, W_t), t~>~0$ as
\be
\begin{cases}
dX_t &=\lambda (y_t^\alpha(\rho(t), W_t) - X_t)dt + D(\rho(t),W_t) dB_t \\
dW_t&=  \tau   P_{W_t} \left(\int \nabla U \left( g(X_t) - g(x)\right) d\rho(t)\right)\, dt
\end{cases}
\label{eq:mono}
\ee
where, $D(\rho_t,W_t)$ is defined consistently with \eqref{eq:iso} and \eqref{eq:aniso}. Process \eqref{eq:mono} is reformulated as
\begin{multline}
\frac\partial{\partial t} f(t,x,w) = - \lambda\nabla_x\cdot \Big( ( y^\alpha(\rho(t),w) - x) f(t,x,w) \Big) 
+ \frac{\sigma^2}2 \Delta_x \big( D(\rho(t),w) f(t,x,w)  \big) 
\\
- \tau \nabla_w \cdot \left( P_w\left(\int \nabla U\left(g(x) - g(y)\right) d\rho(t,y) \right) f(t,x,w) \right)\,,
\label{eq:mf}
\end{multline}
with initial conditions $f(0,x,w) = \rho_0 \otimes \mu_0$,  $\mu_0$ being the uniform distribution over the unit simplex $\Omega$, $\mu_0 = \textup{Unif}(\Omega)$.

The nonlinear partial differential equation \eqref{eq:mf} is a mean-field description of the microscopic dynamics generated by the optimization dynamics described in \cref{sec:3}. We note that the rigorous mean-field limit for single-objective CBO dynamics, which are similar to \eqref{eq:sdex}, was proven in \cite{huang2021meanfield}.
Following previous works, see e.g. \cite{carrillo2018analytical, fornasier2021consensusbased}, we consider such an approximation and mathematically analyze the proposed optimization method by studying a solution $f$ to \eqref{eq:mf}.

\subsection{Convergence to the Pareto front}
\label{s:4.2}
In the following, we assume $f \in \mathcal{C}\left([0,\infty), \mathcal{P}_2(\RR^d \times \Omega)\right)$ to be a solution to \eqref{eq:mf} with initial data given by $\rho_0 \in \mathcal{P}_2(\RR^d)$, $\mu_0 \in \mathcal{P}(\Omega)$.  We assess the performance of a multi-objective algorithm by the average distance form the Pareto front $F$ and we use the  Generational Distance ($GD$) \cite{van1998evolutionary} given by 
\be
GD[\rho(t)] = \left( \int \textup{dist}(g(x),F)^2 d\rho(t,x) \right)^{\frac{1}2}
\label{eq:GD}
\ee

where $\rho(t)$ is the first marginal of $f(t)$.
In the following, we state conditions such that  $DG[\rho(t)]$ decays up to a given accuracy $\ve>0$. 

\begin{assumption}[Uniqueness]
Every sub-problem \eqref{eq:sub} $w\in \Omega$ admits a unique solution $\bar x (w) \in F$. Moreover $\bar x \in \mathcal{C}^1 (\Omega,  \RR^d)$.
\label{a:1}
\end{assumption}

The uniqueness requirement is common in the analysis of CBO methods \cite{fornasier2021consensusbased}. This is due to difficulty to control  the attractive term $y^\alpha$, whenever there are two or more  minimizers. For example, assume $\nu$ is a measure concentrated in two different global minimizes of a sub-problem $w$, $\nu  = \left(\delta_{\bar X_1} + \delta_{\bar X_2}\right)$: the attractive term could be located in the middle between them
\[
y^\alpha(\nu,w) = \frac12 \left (\bar X_1  + \bar X_2 \right)\,
\]
being obviously not(!) minimizer. The regularity assumption on $\bar x$ follows form the transport term in \eqref{eq:mf}  with respect to $w$, and may be dropped if the interaction in the weights space is not present.

The next assumption requires that all scalar objective functions $G(x,w), w \in \Omega$ have a common lower and upper bounds in a neighborhood of the minimizer. See also  \cite{rosasco2017geometry} and the references therein for more details on the following conditions.

\begin{assumption}[Stability at the minimizer]
In a neighborhood of their minimizer, $G(\cdot,w), w \in \Omega$ are $p$-conditioned and satisfy a growth condition: there exists a radius $R>0$, exponents $p>2, q>1$ and constants $c_1, c_2>0$ such that for all $w \in \Omega$
\[
c_1 | x - \bar x (w) |^p \leq G(x,w) - \min_{y\in \RR^d} G(y,w) \leq c_2| x - \bar x (w) |^{1/q} \quad \text{for all}\quad  x: \; |x - \bar x (w) | \leq R \,.
\] 
Moreover, outside such a  neighborhood, the function cannot be arbitrary close to the minimum: there exists $c_3>0$ such that for all $w \in \Omega$
\[
c_3 \leq G(x,w) - \min_{y\in \RR^d} G(y,w) \quad \text{for all}\quad  x:\; |x - \bar x (w) | \geq R \,.
\]
\label{a:2}
\end{assumption}

Finally, we assume the optimal EP points to be bounded. As in other CBO methods we also prescribe a condition on the  initial data $\rho_0$ and $\mu_0$.
\begin{assumption}[Boundedness and initial datum] The set $F$ of optimal points is contained by a bounded, open set $H \subset \RR^d, |H|>0$. The initial distribution $f_0$ is given by $f_0 = \rho_0 \otimes \mu_0$ with $\rho_0 = \textup{Unif}(H)$ and some $\mu_0\in \mathcal{P}(\Omega)$.
\label{a:3}
\end{assumption}

Assumptions \ref{a:1}--\ref{a:3} ensure that the results on the Laplace principle \cite{fornasier2021consensusbased} are applicable to all the different sub-problems \eqref{eq:sub} with uniform choice of  $\alpha$. Therefore, under such assumptions, it possible to prove the convergence of each  in the following sense. Let $\mathbb{E}_{f(t)}[|x -\bar x(w)|^2], f(t) \in \mathcal{P}(\RR^d\times \Omega)$ denote the average $\ell_2$-error
\be
\mathbb{E}_{f(t)}\left[|x -\bar x(w)|^2\right] = \int | x - \bar x (w)|^2\, df(t,x,w)\,
\label{eq:V}
\ee
then, it holds:
\begin{theorem}[ {\cite[Theorem 12]{fornasier2021consensusbased}, \cite[Theorem 2]{fornasier2022aniso}}]
Assume (\ref{a:1})--(\ref{a:3}), $\nabla U \in L^\infty(\RR^m)$ and let $f \in \mathcal{C}\left([0,\infty), \mathcal{P}_2(\RR^d \times \Omega)\right)$ be a solution to \eqref{eq:mf} with  initial datum $f_0$. Let $\kappa=d$ if isotropic diffusion \eqref{eq:iso} is used and $\kappa =1$ for anisotropic diffusion \eqref{eq:aniso}.

For any  accuracy $\ve$, $0<\ve<\mathbb{E}_{f_0}\left[|x -\bar x(w)|^2\right]$,  if
\be 
 \kappa \sigma^2 + C \frac{\tau}{\sqrt{\ve}} < \lambda\,, \quad \text{where} \quad  C := \sqrt{2} \| \nabla W \|_{L^\infty(\RR^m)} \|\nabla_w \bar x \|_{L^\infty(\Omega, \RR^d)}
\ee
and if $\alpha$ is sufficiently large, there exists a time $T>0$ such that 
\[
\mathbb{E}_{f(T)}\left[|x -\bar x(w)|^2\right] = \ve\,.
\]

Moreover,  for all $t \in [0,T]$ it holds 
\be
\mathbb{E}_{f(t)}\left[|x -\bar x(w)|^2\right]  \leq \mathbb{E}_{f_0}\left[|x -\bar x(w)|^2\right]  e^{ - \left( \lambda  - \kappa\sigma^2 - C \tau/\sqrt{\ve} \right) t  } \,.
\label{eq:Vdecay}
\ee

\label{t:CBO}
\end{theorem}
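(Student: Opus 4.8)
The plan is to treat $V(t):=\mathbb{E}_{f(t)}[|x-\bar x(w)|^2]$ from \eqref{eq:V} as a Lyapunov functional and to establish a closed differential inequality that forces it below $\ve$. First I would differentiate $V$ along the mean-field equation \eqref{eq:mf}, testing its weak formulation against $\phi(x,w)=|x-\bar x(w)|^2$; this splits $\tfrac{d}{dt}V$ into three contributions, coming respectively from the $x$-drift, the $x$-diffusion, and the new $w$-transport operator. Integrating the drift by parts in $x$ produces $-2\lambda V$ together with a cross term $2\lambda\int (x-\bar x(w))\cdot(y^\alpha(\rho(t),w)-\bar x(w))\,df$, while integrating the diffusion by parts twice in $x$ and using $\Delta_x|x-\bar x|^2=2d$ in the isotropic case \eqref{eq:iso} and $\partial_{x_k}^2|x-\bar x|^2=2$ componentwise in the anisotropic case \eqref{eq:aniso} yields $\kappa\sigma^2\int|x-y^\alpha(\rho(t),w)|^2\,df$, with $\kappa=d$ or $\kappa=1$ accordingly. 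Crucially, every one of these computations is carried out pointwise in the parameter $w$, so that the single-objective bookkeeping applies to each sub-problem \eqref{eq:sub} at once.

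The $x$-contributions are then controlled exactly as in the cited single-objective analyses \cite{fornasier2021consensusbased,fornasier2022aniso}. The cross term is split by Young's inequality and its excess absorbed into the good $-2\lambda V$ term by invoking the \emph{quantitative Laplace principle}, which under \cref{a:1,a:2,a:3} bounds $\sup_{w\in\Omega}|y^\alpha(\rho(t),w)-\bar x(w)|$ by a quantity controlled by $\sqrt{V(t)}$ plus a remainder that is exponentially small in $\alpha$; after this absorption the net $x$-part is at most $-(\lambda-\kappa\sigma^2)V$ up to an $\alpha$-vanishing error. This is the step I expect to be the \textbf{main obstacle}: one must check that a single $\alpha$ can be chosen large enough to make the Laplace estimate effective \emph{simultaneously} for every $w$, which relies on the compactness of $\Omega$, on the uniform conditioning, growth and separation bounds of \cref{a:2}, and on $\rho(t)$ retaining enough mass near each minimizer $\bar x(w)$ through the choice of initial datum in \cref{a:3}.

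The genuinely new ingredient is the $w$-transport term. Here I would integrate by parts in $w$ and apply the chain rule $\nabla_w|x-\bar x(w)|^2=-2\,(\nabla_w\bar x(w))^\top(x-\bar x(w))$, which is precisely where the regularity $\bar x\in\mathcal{C}^1(\Omega,\RR^d)$ of \cref{a:1} enters and bounds $\|\nabla_w\bar x\|$ uniformly on the compact simplex $\Omega$. Since the projection $P_{w}$ onto the tangent cone is nonexpansive and $|\int\nabla U(g(x)-g(y))\,d\rho(t,y)|\le\|\nabla U\|_{L^\infty(\RR^m)}$, this term is estimated by $2\tau\,\|\nabla_w\bar x\|_{L^\infty}\|\nabla U\|_{L^\infty}\int|x-\bar x(w)|\,df\le C\tau\,V(t)^{1/2}$ via Jensen's inequality, the constant $C=\sqrt2\,\|\nabla U\|_{L^\infty(\RR^m)}\|\nabla_w\bar x\|_{L^\infty(\Omega,\RR^d)}$ reflecting the geometry of the tangent cone of the one-dimensional simplex $\Omega$ for $m=2$.

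Finally I would assemble the differential inequality. While $V(t)\ge\ve$ one has $V(t)^{1/2}\le V(t)/\sqrt{\ve}$, so the three contributions combine into $\tfrac{d}{dt}V(t)\le-(\lambda-\kappa\sigma^2-C\tau/\sqrt{\ve})\,V(t)$ up to the $\alpha$-vanishing remainder, which is made negligible by taking $\alpha$ large. The hypothesis $\kappa\sigma^2+C\tau/\sqrt{\ve}<\lambda$ renders the rate strictly positive, so Gr\"onwall's inequality yields the exponential decay \eqref{eq:Vdecay} on the interval where $V\ge\ve$. Since $V(0)=\mathbb{E}_{f_0}[|x-\bar x(w)|^2]>\ve$ and $t\mapsto V(t)$ is continuous, this estimate forces $V$ to reach the value $\ve$ at some first time $T>0$, which is the assertion of the theorem.
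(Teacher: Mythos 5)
The paper itself contains no proof of this theorem: it is imported wholesale from the cited single-objective CBO analyses, with the statement adjusted to account for the extra transport in the parameter $w$ (note that $\|\nabla W\|_{L^\infty}$ in the displayed constant is evidently a typo for $\|\nabla U\|_{L^\infty}$, as you correctly read it). So there is no in-paper argument to compare against; what can be said is that your plan reproduces exactly the strategy of the cited proofs --- differentiate the Lyapunov functional $V(t)=\mathbb{E}_{f(t)}[|x-\bar x(w)|^2]$ along the weak formulation of \eqref{eq:mf}, absorb the consensus-point error via the quantitative Laplace principle, close a Gr\"onwall inequality while $V\ge\ve$ --- augmented by the one genuinely new estimate, namely bounding the $w$-transport contribution by $C\tau V^{1/2}$ using $\nabla_w|x-\bar x(w)|^2=-2(\nabla_w\bar x(w))^\top(x-\bar x(w))$, the nonexpansiveness of $P_w$, and $\|\nabla U\|_{L^\infty}$. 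This is the right decomposition, and your identification of the role of each of Assumptions \ref{a:1}--\ref{a:3} (regularity of $w\mapsto\bar x(w)$, uniform conditioning, mass near every minimizer) matches what the cited theorems require.

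Two places remain at the level of a plan rather than a proof. First, the uniform-in-$w$ quantitative Laplace principle, which you rightly flag as the main obstacle: the consensus point $y^\alpha(\rho(t),w)$ is built from the \emph{full} marginal $\rho(t)$, so one must show that $\rho(t)$ retains, for all $t\le T$ and all $w\in\Omega$ simultaneously, enough mass in the ball of radius $R$ around $\bar x(w)$; in the cited works this is itself a separate lemma (a lower bound on the mass near the minimizer propagated in time), not a one-line consequence of \cref{a:3}, and your sketch does not supply it. Second, the numerical factor in $C$: your chain-rule computation naturally produces $2\tau\|\nabla U\|_{L^\infty}\|\nabla_w\bar x\|_{L^\infty}V^{1/2}$, i.e.\ a factor $2$ rather than $\sqrt2$, and the appeal to ``the geometry of the tangent cone'' to recover $\sqrt2$ is not substantiated; one would need to track precisely how the projection onto the one-dimensional tangent line of $\Omega$ and the choice of norm on $\nabla U$ interact. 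Neither issue invalidates the approach --- a larger constant $C$ only tightens the hypothesis on $\tau$ --- but both would have to be made explicit for the argument to stand on its own.
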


We remark that the choice of $\alpha$ depends on the estimates given in \cref{a:2} and in particular on the accuracy $\ve$. 

\begin{corollary}
Under the settings of \cref{t:CBO}, if $g$ is Lipschitz continuous it holds
\[
GD[\rho(T)] =\textup{Lip}(g) \sqrt{\ve}
\]
and, for all $t \in [0,T]$,
\[
GD[\rho(t)] \leq \textup{Lip}(g) \sqrt{\mathbb{E}_{f_0}\left[|x -\bar x(w)|^2\right]}
 \exp\left( - \frac{\lambda  - \kappa \sigma^2 -  C \tau/\sqrt{\ve} }{2} t    \right)\,,
\]
where $\rho(t) \in \mathcal{P}(\RR^d)$ is the first marginal of $f(t)$.
\label{c:1}
\end{corollary}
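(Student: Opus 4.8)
The plan is to reduce the Generational Distance to the quantity $\mathbb{E}_{f(t)}\left[|x -\bar x(w)|^2\right]$ that is already controlled by \cref{t:CBO}, exploiting the fact that in the joint law $f(t,x,w)$ every weight $w$ is coupled to the unique EP optimal point $\bar x(w)$, whose image lies on the Pareto front $F$. The whole argument is essentially a one-line comparison followed by an integration.

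First I would observe that, by \cref{a:1}, for every $w \in \Omega$ the point $\bar x(w)$ is EP optimal, so that $g(\bar x(w)) \in F$. Consequently, for any $x \in \RR^d$ and any $w$, the distance of $g(x)$ to the front is dominated by its distance to this particular point of the front,
\[
\textup{dist}(g(x),F) \leq |g(x) - g(\bar x(w))|\,.
\]
Invoking the Lipschitz continuity of $g$ then yields the pointwise bound $\textup{dist}(g(x),F)^2 \leq \textup{Lip}(g)^2\, |x - \bar x(w)|^2$.

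Next I would integrate this inequality against the joint law $f(t,x,w)$. Since the integrand on the left depends on $x$ only, integrating out the variable $w$ recovers the first marginal $\rho(t)$ and hence $GD[\rho(t)]^2$; the right-hand side becomes $\textup{Lip}(g)^2\,\mathbb{E}_{f(t)}\left[|x-\bar x(w)|^2\right]$. Taking square roots gives the master estimate
\[
GD[\rho(t)] \leq \textup{Lip}(g)\sqrt{\mathbb{E}_{f(t)}\left[|x -\bar x(w)|^2\right]}\,.
\]
The conclusion follows by inserting the two bounds of \cref{t:CBO}: at $t=T$ one substitutes $\mathbb{E}_{f(T)}\left[|x-\bar x(w)|^2\right]=\ve$ to obtain $GD[\rho(T)] \le \textup{Lip}(g)\sqrt{\ve}$, while for $t\in[0,T]$ one uses the exponential decay \eqref{eq:Vdecay}, whose square root produces exactly the stated rate with the factor $1/2$ in the exponent.

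The derivation presents no substantial analytical obstacle. The only points requiring care are that the front-distance functional is independent of $w$, so that marginalizing over $\Omega$ is an exact identity rather than a loss-incurring inequality, and that the coupling $w \mapsto \bar x(w)$ used under the integral sign is well defined and measurable — both of which are guaranteed by the uniqueness and $\mathcal{C}^1$ regularity of $\bar x$ postulated in \cref{a:1}.
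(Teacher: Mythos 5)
Your proof is correct and follows essentially the same route as the paper: the chain $\textup{dist}(g(x),F) \leq |g(x)-g(\bar x(w))| \leq \textup{Lip}(g)\,|x-\bar x(w)|$, justified by \cref{a:1} and \cref{t:pareto}, followed by integration against $f(t,x,w)$ and insertion of the bounds from \cref{t:CBO}; you merely spell out the marginalization step that the paper leaves implicit. Note that your argument (correctly) yields $GD[\rho(T)] \leq \textup{Lip}(g)\sqrt{\ve}$ rather than the equality written in the corollary statement, which appears to be a typo in the paper.
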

\begin{proof}
Since every sub-problem admits a unique solution (Assumption \ref{a:1}), by \cref{t:pareto} every solution $\bar x(w)$ is EP optimal and therefore its image $g(\bar x(w))$ belongs to the Pareto front $F$. Therefore 
\[
\textup{dist}(g(x),F) \leq  | g(x) - g(\bar x (w))| \leq \textup{Lip}(g) | x- \bar x(w)|
\]
from which follows that the generational distance $GD$ is bounded by the average $\ell_2$-error.
\end{proof}

\cref{t:CBO,c:1} show that CBO mechanism is able to successfully solve all sub-problems \eqref{eq:sub} simultaneously. In the next section, we will analyze the dynamics in the parameters space $\Omega$ to investigate the diversity of the computed solution.

\begin{remark}
In \cref{t:CBO}, $\tau$ needs to be taken of order $o(\sqrt{\ve})$ suggesting that the parameters should adapt at a much slower time scale with respect to the positions, in order not to interfere with the CBO dynamics. With no weights vectors interaction, $\tau=0$, the decay estimate \eqref{eq:Vdecay} is independent of $\ve$ and, in particular, the particles converge faster towards EP optimal points. 
\label{r:tau0}
\end{remark}

\subsection{Decay of diversity measure}
\label{s:4.3}

The aim of interaction \eqref{eq:iterw} is  improve the distribution of the parameters $\{W_t^i\}_{i=1}^N$ so that, in view of \cref{t:pareto}, the corresponding (weak) EP optimal points are well-distributed in the image space. Under suitable assumptions, such dynamics corresponds to a gradient flow on the unitary simplex $\Omega$.

For any sub-problem \eqref{eq:sub} parametrized by $w \in \Omega$, let $\bar x(w)$ be one of its global minima, which we assume exists. As we are interested in the relation between $w$ and its correspondent point on the Pareto front $g\left(\bar x(w)\right) \in F$, let us formally insert in the mean-field model \eqref{eq:mf} solutions of the form
\be
f(t,x,w) = \delta(x - \bar x(w) ) \mu(t,w)
\label{eq:f}
\ee
where $\mu(t) \in \mathcal{P}(\Omega)$. In ansatz \eqref{eq:f}, the location $x$ of the particle $(x,w)$ corresponds exactly to a solution $\bar x(w)$ to its sub-problem $w\in \Omega$. This is justified by the convergence result (\cref{t:CBO}) and by the fact that the positions dynamics takes place at a faster time scale then the parameters adaptation, see \cref{r:tau0}.

The reduced mean field equation in strong form is then given by  
\begin{multline}\notag
\frac\partial{\partial t} f(t,x,w) = - \lambda\nabla_x\cdot \Big( \left( y^\alpha(\rho(t),w) - \bar x(w)\right) f(t,x,w) \Big) 
+ \frac{\sigma^2}2 \Delta_x \big( D(\rho(t),w) f(t,x,w)  \big) 
\\
- \tau \nabla_w \cdot \left( P_w\left(\int \nabla U\left(g(\bar x(w)) - g(\bar x(v))\right) df(t,y,v) \right) f(t,x,w) \right)\, 
\label{eq:mfreduced}
\end{multline}
and, the marginal $\mu(t)$ over $\Omega$ fulfills 
\be
\frac{\partial}{\partial t} \mu(t,w) = - \tau  \nabla_w \cdot \left( P_w\left(\int \nabla U\left(\bar g(w) -\bar g(v)\right) d\mu(t,v) \right) \mu(t,w) \right)
\label{eq:red1}
\ee
where for simplicity we introduced $\bar g := g \circ \bar x$. 

\begin{assumption} The Pareto front $F$ is exactly the unitary simplex $\Omega$ and the potential energy $U$ is radially symmetric.
\label{a:4}
\end{assumption}

\begin{lemma} Under \cref{a:4}, for all $w,v \in \Omega$ it holds
\be
P_w\Big(  \nabla U\big(\bar g(w) - \bar g(v)\big)   \Big) = - P_w\big(\nabla U (w - v)  \big)\,.
\label{eq:P}
\ee
\label{l:1}
\end{lemma}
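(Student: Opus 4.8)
The plan is to show that under \cref{a:4} the selection map $\bar g = g\circ\bar x$ acts on $\Omega$ as the coordinate-swap reflection, and then to combine this with the radial symmetry of $U$ and the fact that $T(w,\Omega)$ is one-dimensional. First I would identify $\bar g(w)$ explicitly. Since $g$ is componentwise positive, the sub-problem \eqref{eq:sub} reads $\min_x \max_k w_k g_k(x)$, and by \cref{t:pareto} its minimizer $\bar x(w)$ is EP optimal, so $z := \bar g(w)$ lies on the front $F = \Omega$. Because the weighted Chebyshev objective is strictly increasing in each $g_k$, the minimizer lands on $F$, and among points of $\Omega$ the quantity $\max(w_1 z_1, w_2 z_2)$ is minimized where the two weighted components balance, $w_1 z_1 = w_2 z_2$; together with $z_1 + z_2 = 1$ this forces $z = (w_2, w_1)$. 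Hence $\bar g(w) = Sw$, where $S$ denotes the swap $S(a,b) = (b,a)$.

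Next I would observe that for $w,v \in \Omega$ the difference $w-v$ is tangent to the segment $\Omega$, i.e. parallel to $(1,-1)$: indeed $w_1 + w_2 = v_1 + v_2 = 1$ gives $(w-v)_2 = -(w-v)_1$. The reflection $S$ fixes the axis spanned by $(1,1)$ and acts as $-1$ on the orthogonal direction $(1,-1)$, so $S(w-v) = -(w-v)$, and by linearity of $\bar g = S$ on $\Omega$,
\[
\bar g(w) - \bar g(v) = S(w-v) = -(w-v).
\]
Since $U$ is radially symmetric it is even, so $\nabla U$ is odd, $\nabla U(-z) = -\nabla U(z)$; applying this with $z = w-v$ and inserting the previous identity yields $\nabla U\big(\bar g(w)-\bar g(v)\big) = -\nabla U(w-v)$.

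It then remains to project. At any interior $w$ the tangent cone $T(w,\Omega)$ coincides with the whole line $\mathrm{span}\{(1,-1)\}$, so $P_w = P_{T(w,\Omega)}$ is linear and $P_w(-h) = -P_w(h)$; taking $h = \nabla U(w-v)$ gives exactly the claimed identity \eqref{eq:P}.

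The step I expect to require the most care is the explicit identification $\bar g(w) = Sw$, since it rests on the selection made by the weighted Chebyshev norm on the front together with the hypothesis $F = \Omega$ of \cref{a:4}; once this is in place the remainder is elementary. A secondary technical point concerns the two vertices of $\Omega$, where $T(w,\Omega)$ degenerates to a half-line and $P_w$ is no longer linear, so the symmetry $P_w(-h) = -P_w(h)$ can fail for the relevant radial direction; there one either verifies \eqref{eq:P} directly or notes that this boundary set is negligible for the mean-field evolution \eqref{eq:red1}.
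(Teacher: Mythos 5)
Your proposal is correct and follows essentially the same route as the paper: identify $\bar g(w)=Aw$ with $A$ the coordinate swap via the balance condition $w_1z_1=w_2z_2$ on $F=\Omega$, use the radial symmetry of $U$ (you via oddness of $\nabla U$ applied to $w-v\parallel(1,-1)$, the paper via the equivariance $\nabla U(Az)=A\nabla U(z)$ and a decomposition in the basis $(1,1),(1,-1)$), and conclude by projecting onto $T(w,\Omega)$. Your explicit caveat about the two vertices of $\Omega$, where $T(w,\Omega)$ is a half-line and $P_w$ is not odd, is a point the paper's proof silently glosses over, so if anything you are slightly more careful.
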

\begin{proof}
 We note that when $F = \Omega$, all weakly EP optimal points are also EP optimal and hence by \cref{t:pareto} $\bar g(w) = g(\bar x(w)) \in F$ for all $w \in \Omega$. Then, there exists $s \in [0,1]$ such that $\bar g (w) = (s, 1-s)$ for all $w$. By definition of the sub-problem \eqref{eq:sub} with $w = (w_1,w_2) = (w_1, 1-w_1)$,
 \[
 \bar g(w) = \min_{y \in F} \max \left\{ y_1w_1, y_2 w_2 \right\} = 
 \min_{s \in [0,1]} \max \{ s w_1, (1-s) (1-w_1)  \}\,.
 \]
At the minimizer, it must hold $s w_1 = (1-s)(1-w_1)$ and hence $s = (1-w_1)$. It follows that
 \be
\bar g (w) = A w, \quad \text{where} \quad A = \begin{pmatrix}
0 & 1\\
1 & 0 
\end{pmatrix}.
\label{eq:gbar}
 \ee
Since $U$ is radially symmetric it holds $\nabla U ( Aw - Av) = A \nabla U(w-v)$.

Finally, let us consider the basis $n_1 = (1,1)^\top, n_2 = (1,-1)^\top$ and a vector $u \in \RR^2, u = u_1 n_1 + u_2 n_2$. We note that $P_w$ always projects towards $n_2$. Together with the fact that $Au = u_1n_1 - u_2n_2$, this leads to
\[
P_w(Au) = P_w(-u_2 n_2) = P_w(-u)\, 
\]
and the identity \eqref{eq:P} follows.
\end{proof}

Thanks to \cref{l:1}, under \cref{a:4} equation \eqref{eq:red1} can be simplified to 
\be
\frac{\partial}{\partial t} \mu(t,w) =  - \tau  \nabla_w \cdot \left( P_w\left( - \int \nabla U\left(w -v )\right) d\mu(t,v) \right) \mu(t,w) \right)
\label{eq:red2}
\ee
and  initial conditions $\mu(0) = \mu_0$.
Equation \eqref{eq:red2} describes the continuum dynamics of particles which binary interact and that are confined to the set $\Omega$. Such aggregation model on bounded domains has been subject of several works, see for instance \cite{fatecau2017swarm,patacchini2022nonlocal}. Particularly relevant to the present work is \cite{carrillo2016nonlocal} where general prox-regular sets, like $\Omega$, are considered. 

\begin{theorem}[{\cite[Theorem 1.5 ]{carrillo2016nonlocal}}]
Assume $U \in \mathcal{C}^1(\RR^2)$ to be $\tilde\lambda$-geodetically convex on $\textup{Conv}(\Omega - \Omega)$ for some $\tilde\lambda \in \RR$. For any initial data $\mu_0 \in \mathcal{P}_2(\Omega)$ there exists a locally absolutely continuous curve $\mu(t) \in \mathcal{P}(\Omega), t>0,$ such that $\mu$ is a gradient flow with respect to $\mathcal{U}$. Also, $\mu$ is a weak measure solution to \eqref{eq:red2}. 

Furthermore,
\be
\frac{d}{dt}\,\mathcal{U}(\mu(t)) \leq - \int \left |  P_w\left( \nabla U \ast \mu(t) (w)      \right)\right|^2 \mu(t,w) \,,
\label{eq:decayU}
\ee
\label{t:flow}
where $*$ denotes the convolution operator.
\end{theorem}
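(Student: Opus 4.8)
The plan is to realize the statement as an instance of the abstract theory of gradient flows in the Wasserstein space $\mathcal{P}_2(\Omega)$ in the sense of Ambrosio--Gigli--Savar\'e, adapted to the prox-regular constraint set $\Omega$, exactly as carried out in \cite{carrillo2016nonlocal}. First I would record that $\Omega$, being compact and convex, is prox-regular, so that the metric projection onto the tangent cone $T(w,\Omega)$ (hence the operator $P_w$) is single-valued and sufficiently regular. The functional to be dissipated is the interaction energy $\mathcal{U}(\mu)=\iint U(w-v)\,d\mu(v)\,d\mu(w)$, which I would first show is lower semicontinuous on $\mathcal{P}_2(\Omega)$ and $\tilde\lambda$-geodesically convex along (generalized) geodesics. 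The latter follows from the assumed $\tilde\lambda$-geodesic convexity of $U$ on $\textup{Conv}(\Omega-\Omega)$ via McCann's interpolation argument, since differences $w-v$ of optimally transported masses remain in $\textup{Conv}(\Omega-\Omega)$; this is where the convexity hypothesis on $U$ enters.

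Second, I would construct the curve $\mu(t)$ through the minimizing movement (JKO) scheme
\[
\mu^{n+1}_\tau \in \underset{\mu \in \mathcal{P}_2(\Omega)}{\textup{argmin}}\;\left\{ \frac{1}{2\tau}\, W_2^2(\mu, \mu^n_\tau) + \mathcal{U}(\mu)\right\}, \quad \mu^0_\tau = \mu_0\,,
\]
and pass to the limit $\tau \to 0$. Compactness of $\mathcal{P}(\Omega)$ together with the $\tilde\lambda$-convexity yields a locally absolutely continuous limit curve which is a curve of maximal slope, and in fact the unique solution of the associated Evolution Variational Inequality. This already gives the existence and gradient-flow claims.

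Third --- and this is the geometric heart of the argument --- I would identify the Wasserstein subdifferential of $\mathcal{U}$ at $\mu$ on the constrained space with the projected field $w \mapsto P_w\big(\nabla U \ast \mu(w)\big)$, where $\nabla U \ast \mu(w) = \int \nabla U(w-v)\,d\mu(v)$. On an unconstrained domain the subdifferential is simply $\nabla U \ast \mu$; prox-regularity of $\Omega$ ensures that the constrained minimal selection is precisely its projection onto the tangent cone, whence the slope identity $|\partial \mathcal{U}|^2(\mu) = \int |P_w(\nabla U \ast \mu)|^2\,d\mu$. Feeding the velocity $v_t(w)=-P_w(\nabla U \ast \mu(t))$ into the continuity equation reproduces exactly the weak formulation of \eqref{eq:red2}, while the chain rule along curves of maximal slope, $\tfrac{d}{dt}\mathcal{U}(\mu(t)) = -|\partial\mathcal{U}|^2(\mu(t))$, yields the dissipation inequality \eqref{eq:decayU}.

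The step I expect to be the main obstacle is this third one: correctly characterizing the metric slope on the prox-regular set $\Omega$ and showing that the boundary contributions are captured \emph{exactly} by the tangent-cone projection $P_w$, rather than by the raw convolution field. This is the delicate analysis that \cite{carrillo2016nonlocal} supplies. A secondary difficulty is that if $U$ carries a singularity at the origin (as for the Riesz energy in \eqref{eq:riesz}) the $\mathcal{C}^1(\RR^2)$ hypothesis fails and the energy must be handled by truncation and approximation; under the stated assumption $U \in \mathcal{C}^1(\RR^2)$, however, this complication does not arise.
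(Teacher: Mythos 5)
The paper offers no proof of \cref{t:flow}: the statement is imported verbatim from \cite[Theorem 1.5]{carrillo2016nonlocal}, so there is no internal argument to compare against. Your sketch reproduces the strategy of that reference --- realize $\mathcal{U}$ as a $\tilde\lambda$-geodesically convex, lower semicontinuous functional on $(\mathcal{P}_2(\Omega), W_2)$, build the curve by minimizing movements, and identify the minimal Wasserstein subdifferential on the constrained set with the tangent-cone projection of $\nabla U \ast \mu$ --- and you correctly single out that last identification as the genuinely delicate step (in \cite{carrillo2016nonlocal} it is carried out for general uniformly prox-regular sets, where one must also replace convexity along geodesics by a weaker convexity along particular interpolating curves; for the convex simplex $\Omega$ your streamlined $\tilde\lambda$-convexity argument suffices). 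One small caution: $P_w$ is the projection onto a convex \emph{cone}, not a linear map, so $P_w(-h) \neq -P_w(h)$ at boundary points of $\Omega$; the velocity field dictated by \eqref{eq:red2} is $P_w\bigl(-\nabla U \ast \mu\bigr)$ rather than your $-P_w\bigl(\nabla U \ast \mu\bigr)$, and the dissipation integrand in \eqref{eq:decayU} should be read with the same convention.
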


Under \cref{a:4} and thanks to  relation \eqref{eq:gbar} between $\Omega$ and $F$, \cref{t:flow} states that the energy over the front is decreasing. We note that the flow may convergence to the stationary points of \eqref{eq:red1} that are not minimal configurations, as observed in \cite{fatecau2017swarm} for even simple domains. 

Clearly, without ansatz \eqref{eq:f}, there is no guarantee that the potential decreases along the evolution of the algorithm. 
Quite the opposite, by \cref{t:CBO} particles are expected to concentrate on the Pareto front leading to an increased potential $\mathcal{U}$. Nevertheless, by \cref{t:CBO} there exists a time $T>0$ where
\[
\int |  x- \bar x(w) |^2\, df(T,x,w) < \ve \quad \text{and hence} \quad x \approx \bar x(w)
\]
making ansatz \eqref{eq:f} valid. Therefore, we claim that the reduced model \eqref{eq:red2}  describes the dynamics for $t>T$. We will numerically investigate  two phases of the algorithm: the first one when  concentration over the Pareto happens, and the second  when the potential $\mathcal{U}$ decays leading the an improved diversity of the solution. 

%

\section{Numerical experiments}
\label{sec:5}

In this section, we numerically investigate the performance of the proposed method by testing it against several benchmark multi-objective problems. 

The adaptive multi-objective consensus based optimization (AM-CBO) algorithm is obtained from an Euler--Maruyama time-discretization of \eqref{eq:sdex} and \eqref{eq:sdew} (or \eqref{eq:iterw} if $m>2$).
Let $\Delta t>0$ be a fixed time-step. For $k=0,1,\dots$, the particles positions are iteratively updated according to
\begin{equation}
X^i_{k+1} = X_k^i + \lambda\left(Y_k^\alpha(W^i_k) - X_k^i \right)\Delta t  +  \sigma D^i_k \sqrt{\Delta t}B_k^i
\label{eq:iterx}
\end{equation}
for all $i = 1, \dots, N$ where $B_k^i$ are multivariate independent random vectors, $B_k^i \sim \mathcal{N}(0,I_d)$. The update rule \eqref{eq:iterx} is overparametrized and in CBO optimization schemes  typically  $\lambda=1$ is used.

Similar to the projected gradient flow scheme used in \cite{patacchini2022nonlocal}, we replace the instantaneous projection to the tangential space $T(w,\Omega)$ by the projection $\Pi_{\Omega}$ to $\Omega$,
\[
\Pi_{\Omega} (v)  = \left \{w \in \Omega\;:\: |v-w| = \inf_{\xi \in \Omega} |v - \xi|  \right \} \quad \textup{for} \quad v \in \RR^m 
\]
and discretize the dynamics in $\Omega$ as
\be
\begin{cases}
V^{i}_{k+1} &= W^i_k + \frac \tau {N}\sum_{j=1}^N \nabla U\left(g(X_k^i) - g(X_k^j)\right) \Delta t\\
W^{i}_{k+1} &= \Pi_{\Omega}\left( V^i_{k+1} \right) 
\end{cases}
\,,
\label{eq:iterw2}
\ee
for $m=2$, while for $m>2$ it reads
\be
\begin{cases}
V^{i}_{k+1} &= W^i_k - \frac \tau {N}\sum_{j=1}^N  \frac{W_k^i - W_k^j}{|W_k^i - W_k^j|}\; r' \left(|g(X_k^j) - g(X_k^i)|\right)\Delta t\\
W^{i}_{k+1} &= \Pi_{\Omega}\left( V^i_{k+1} \right) 
\label{eq:iterw_bis}
\end{cases}
\,.
\ee

The complete optimization method is described by Algorithm \ref{alg:1}. A remark on the  computational complexity follows.

\begin{algorithm}
\caption{AM-CBO} 
\label{alg:1}
\begin{algorithmic}
\STATE{Set parameters: $\alpha, \lambda, \sigma, \tau, \Delta t$}
\STATE{Initialize the positions: $X^i_0 \sim \rho_0\,, i=1, \dots,N$}
\STATE{Initialize the weights vectors $\{ W_0^i\}_{i=1}^N $ uniformly in $\Omega$}
\STATE{$k\gets 0$}
\WHILE{stopping criterion is NOT satisfied}
	\STATE{Compute $g(X^i_k)\,, i=1, \dots,N$ }
	\FOR{$i=1, \dots, N$}
		\STATE{compute $Y_k^\alpha(W_k^i)$ according to \eqref{eq:Ya}}
		\STATE{sample $B^{i}_{k}$ from $\mathcal{N}(0, I_d)$}
		\STATE{update  $X^i_{k+1}$ according to \eqref{eq:iterx}}
		\STATE{update  $W^i_{k+1}$ according to \eqref{eq:iterw2} (or \eqref{eq:iterw_bis}})
	\ENDFOR
	\STATE{$k \gets k+1 $ }
\ENDWHILE
\RETURN $\{ X_{k}^i \}_{i=1}^N$
\end{algorithmic}
\end{algorithm}

For the sake of reproducible research, in the GitHub repository 
\url{https://github.com/borghig/AM-CBO}
an implementation in MATLAB code of the proposed algorithm is made available.

\begin{remark}
Even though in every iteration the objective function $g$ is evaluated only $N$ times, the overall computational complexity is $\mathcal{O}(N^2)$ because the computation of $Y^i_k(w)$ requires $\mathcal{O}(N)$ computations, as well as the parameters update \eqref{eq:iterw} which is particularly costly. 

One can reduce the computation complexity by considering only a random subset $I_k^M \subset \{1, \dots, N \}$ of $M \ll N$ particles when computing \eqref{eq:Ya} and \eqref{eq:iterw2}, by substituting
\[
\frac1N \sum_{j=1}^N (\cdot)^j \quad \textup{with} \quad  \frac1{M} \sum_{j \in I^M_k} (\cdot)^j\,,
\]
whenever a sum over the different particles is performed.  Inspired by Monte-Carlo particle simulations \cite{AlPa, JLJ}, this mini-random batch technique allows to lower the complexity to $\mathcal{O}(NM)$. We also note that that Fast Multipole Methods (FMM) \cite{greengard1987fast} may additionally be  used to speed up the computation of the potential field, Then,  the  computational complexity of \eqref{eq:iterw2}, \eqref{eq:iterw_bis} is further reduced.
\end{remark}

\subsection{Performance metrics}

Denote by $\{X_k^i\}_{i=1}^N$ the set of particle positions  at the $k$-th algorithm iteration and their  empirical distribution by  $\rho_k^N \in \mathcal{P}(\RR^d)$.
We employ three different energies, the Riesz $s$-energy \eqref{eq:riesz}, Newtonian and the Morse potentials, both to measure the solutions diversity and to determine the dynamics of the vector weights. The Newtonian binary potential is given by
\be
U_N (z) = 
\begin{cases}
\log(|z|)      & \textup{if} \;\; m = 2 \\
 |z|^{2-m}    & \textup{if}\;\;   m >2
\end{cases}
\;, 
\label{eq:newt}
\ee
while the Morse potential is given
\be
U_{M}(z) = e^{-C|z|} \quad \textup{with}\quad C>0. 
\label{eq:morse}
\ee
All  considered potentials describe short-range repulsion between the particles. While the Morse potential is $\tilde \lambda$-geodetically convex, the Newtonian and Riesz repulsion are not. Since we will also employ the corresponding energies $\mathcal{U}_R$, $\mathcal{U}_N$, $\mathcal{U}_M$ to define the interaction between  parameters, the constant $C$ can be considered as an algorithm parameter when the Morse repulsion is used.

To show the validity of the energy-based diversity metrics, we additional consider the hypervolume contribution metric $\mathcal{S}$ \cite{zitzler1998multi}. Let $g^* \in \RR^m$ be a maximal element with respect to the natural partial ordering 
\[
y_i \prec g^*_j \quad \textup{for all}\quad y \in F\,,
\]
the hypervolume measure is given by the Lebesque measure of the set of points between the computed solution and the maximal point $g^*$, that is
\be
\mathcal{S} [\rho_k^N] = \left | \bigcup_{i=1}^N \left \{y \in \RR^m \; | \;  g(X^i_k) \prec y \prec g^* \right\} \right|.
\label{eq:hyp}
\ee
Maximizing $\mathcal{S}$ has been shown to lead to a diverse approximation of the Pareto front \cite{emmerich2005emo}.

In \cref{s:4.2},  the convergence of the mean-field dynamics towards the Pareto front is shown by studying the evolution of the Generation Distance $GD$  \eqref{eq:GD}. In the experiments, we approximate this quantity by considering a reference approximation $\{ y^j\}_{j=1}^M$ of the front with  $M=100$ points $y^i \in F$, $i = 1, \dots, M$ for every test problem. More details on the reference solution are given  in \cref{app:1}.
For  simplicity, we indicate the numerical approximation of the Generational Distance again by $GD$, which is  defined by
\be
GD[\rho_k^N] = \left( \frac 1 N \sum_{i=1}^N \textup{dist}(g(X_k^i), F_M)^2
\right)^{\frac12}\,.
\label{eq:GDnum}
\ee

The  Inverted Generational Distance $IGD$ is also considered. It consists of the average distance between the points of the reference solution $\{ y^i\}_{j=1}^M$ and the computed front
\be
IGD[\rho_k^N] = \left( \frac 1 M \sum_{j=1}^M \textup{dist}(y^j, G_k)^2
\right)^{\frac12} \quad \textup{with}\quad G_k := \{g(X_k^i)\,|\,i = 1,\dots,N\}\,.
\label{eq:IGDnum}
\ee
Contrary to $GD$ which only measures the distance form the Pareto front, $IGD$ takes in account the diversity of the computed solution, too. Hence, $IGD$ is also a suitable indicator of the  optimality of the solution.

\subsection{Test problems}

Test problems with diverse Pareto front geometries are selected  to show the performance of the proposed method. In the Lamé problems \cite{emmerich2007lame} the parameter $\gamma$ controls the front curvature: we use $\gamma = 0.25, 1, 3$ to obtain convex, linear and concave fronts respectively.  We also consider the DO2DK \cite{branke2004finding} problems with $k=2,s=1$ and $k=4, s=2$. Here, the Pareto fronts have more complex geometries as they are not symmetric and, in one case,   discontinuous. 
All above problems are scalable to any dimension of the search space $d$ and in the image space $m$. For presentation purposes, we restrict ourselves to bi-objective optimization problems by setting $m=2$, but consider possibly large $d.$ In this case, the fronts analytical description are known, allowing us to obtain  reference solutions. The problems definitions are recalled in \cref{app:2} for completeness.

\newcommand\ww{0.96}

\begin{figure}

\centering
\includegraphics[trim = 0cm 0cm 0cm 0cm , clip, width=\ww\linewidth]{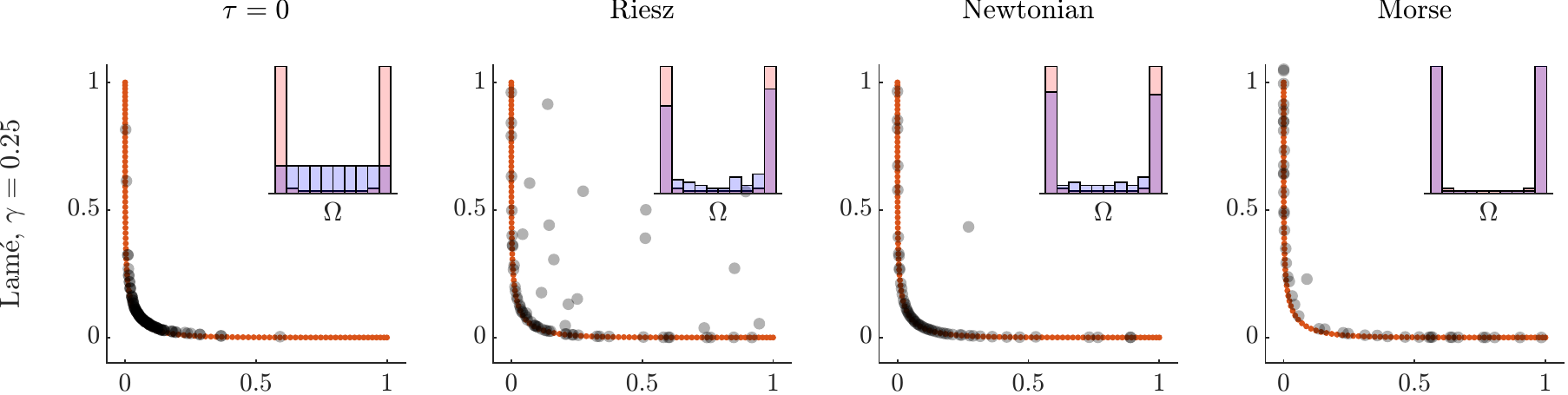}

\bigskip
\includegraphics[trim = 0cm 0cm 0cm 0.5cm, clip, width=\ww\linewidth]{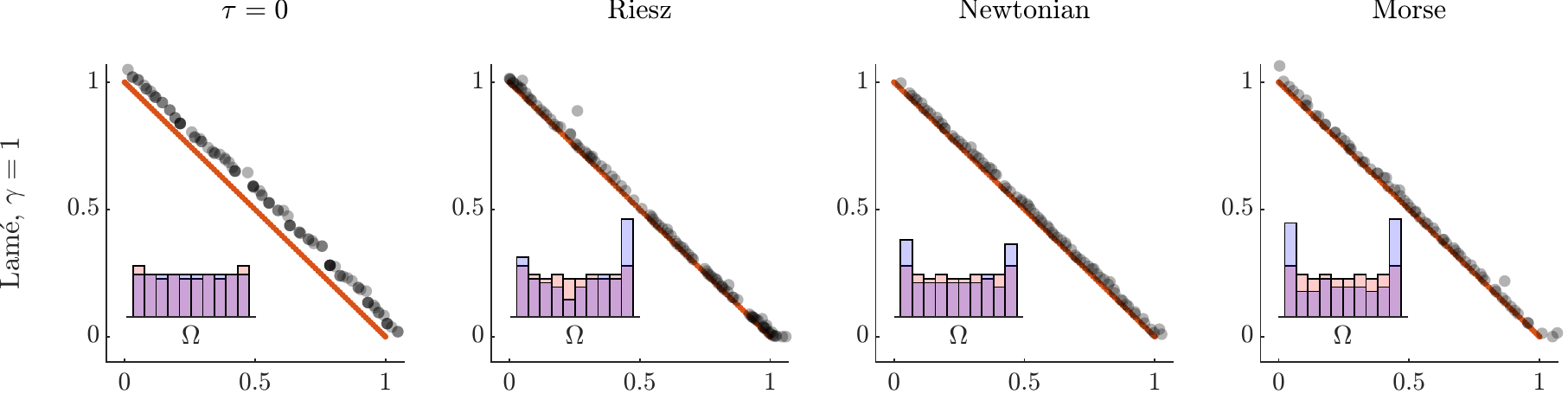}
\bigskip

\includegraphics[trim = 0cm 0cm 0cm 0.5cm , clip, width=\ww\linewidth]{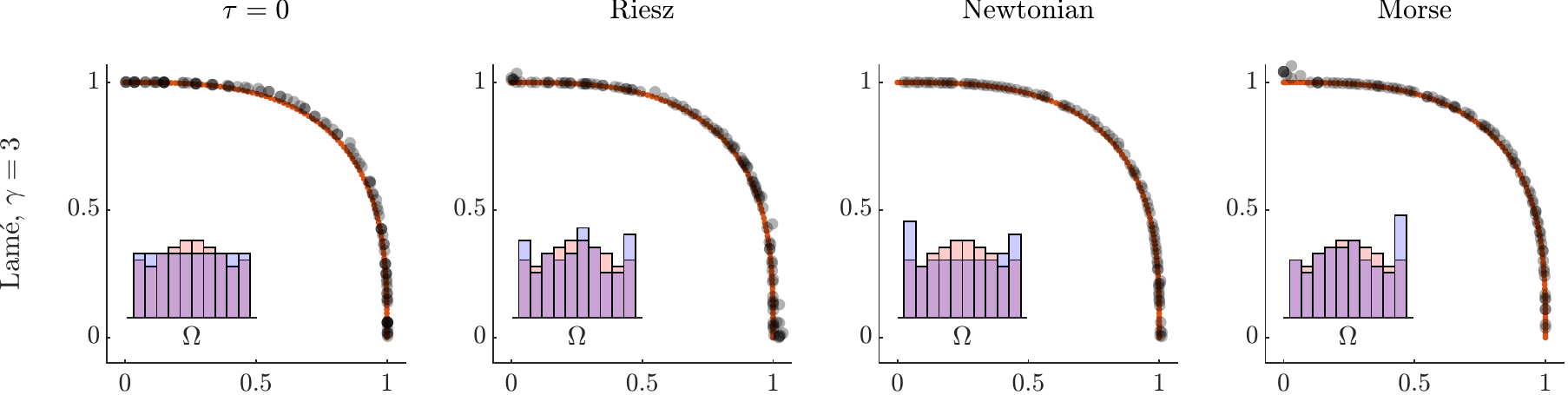}

\bigskip
\includegraphics[trim = 0cm 0cm 0cm 0.5cm , clip, width=\ww\linewidth]{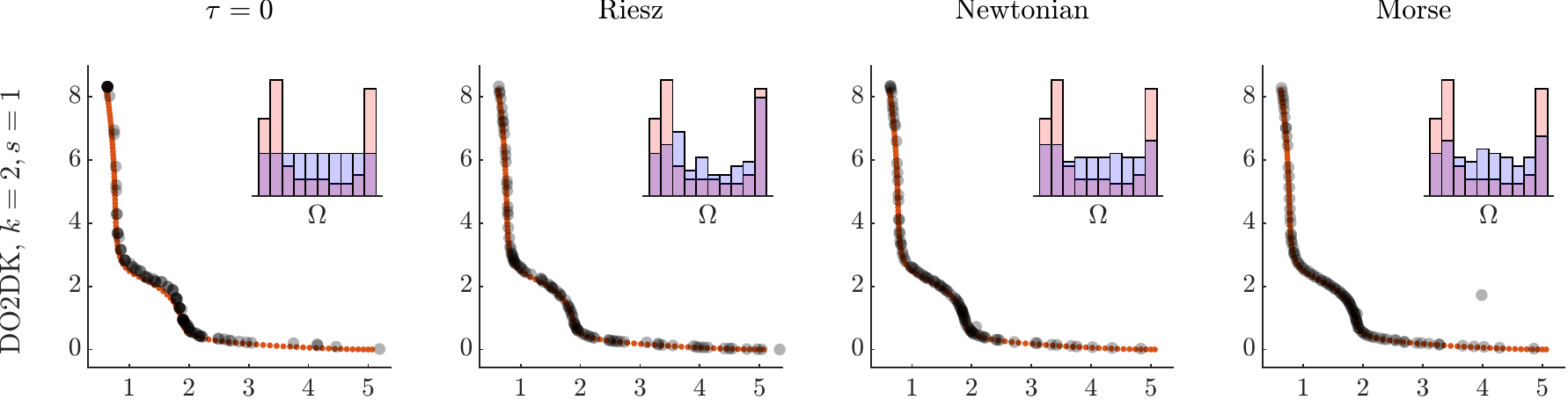}

\bigskip
\includegraphics[trim = 0cm 0cm 0cm 0.5cm , clip, width=\ww\linewidth]{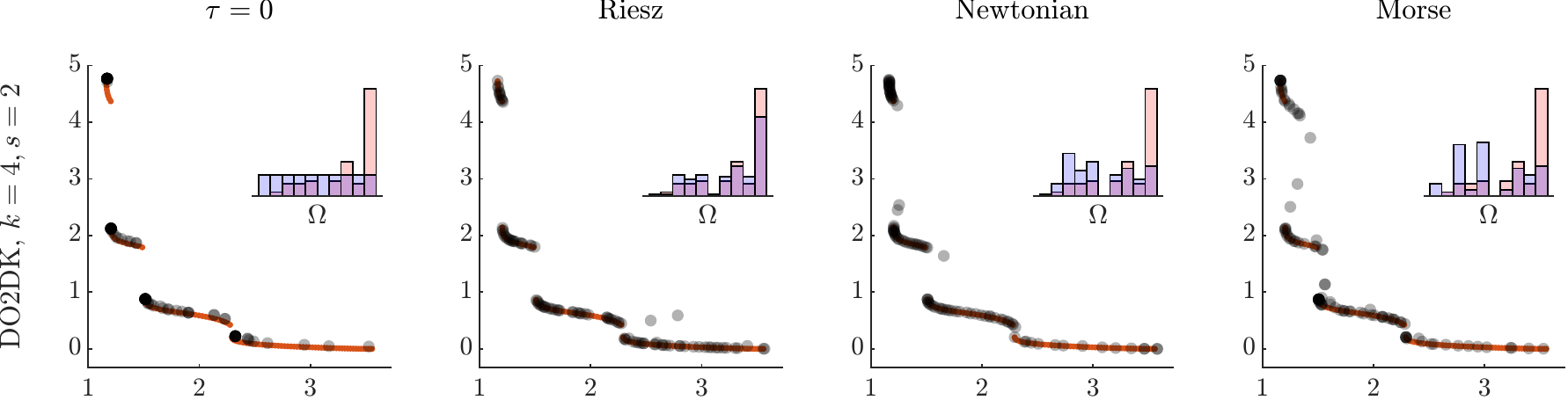}

\caption{In black, particles position in the image space after a single run. The reference
solution is displayed in red. Four different parameters interaction strategies are used: no
interaction, Riesz, Newtonian and Morse potential. Histograms show the final distribution
over $\Omega$ (blue) and the optimal one (red).}
\label{fig:1}
\end{figure}

In this section,  we use Algorithm \ref{alg:1} in four different scenarios

\begin{enumerate}
\item No parameters interaction $\tau=0$;  
\item Riesz potential \eqref{eq:riesz}, with $\tau=10^{-5}$\,;
\item  Newtonian potential \eqref{eq:newt}, with $\tau=10^{-3}$\,;
\item  Morse potential \eqref{eq:morse}, with $\tau=10^{-1}$, $C=20$;
\end{enumerate}
The first scenario clearly corresponds to the standard M-CBO approximation, while the others to different AM-CBO strategies.
To validate  model \eqref{eq:red2} and \cref{t:flow}, we update the parameters according to \eqref{eq:iterw2}.
The initial weights vectors $\{ W_0^i\}_{i=1}^N$ are taken (deterministically) uniformly distributed over $\Omega$, while the particle positions are uniformly sampled over $[0,1]^d$, $d=10$. We employ $N=100$ particles, which evolve for a maximum of $k_{\textup{max}} = 5000$ steps. The remaining parameters are set to $\lambda = 1, \sigma = 4, \alpha = 10^6$. This parameter choice consists of a compromise between the optimal parameters of each problem. Anisotropic diffusion \eqref{eq:aniso} is used and a projection step ensures the particle positions remain in the search space $[0,1]^d$, which is the same for all considered problems.

\cref{fig:1} shows the computed solutions, in the image-space, in the four different scenarios.  Regardless of the interaction on $\Omega$, the particles always converge towards $EP$ optimal points and hence to the Pareto front.  By definition of the Chebyshev sub-problems \eqref{eq:sub}, a uniform distribution in $\Omega$ leads to an uniform distribution of the particles over the front only when $F$ is linear (as in the Lamé problem $\gamma=1$). Indeed, \cref{fig:1} shows that the particles are well distributed even when there is not weights interaction ($\tau=0$). If the front geometry differs from this straight segment, the optimal parameters distribution on $\Omega$ differs form the uniform one. In particular, subsets of the Pareto front which are almost parallel to the axis are difficult to approximate without any  interaction in the parameter space, see for instance Lamé $\gamma = 0.25$ and the DO2DK problems in \cref{fig:1}. When using $\tau \neq 0$, the solutions improves as the particles are more distributed over the entire front.

\begin{table}
\centering
\begin{tabular}{|l|l|c|c|c|c|c|c|c|}
\hline
 Problem& Interaction & $GD$ & $\mathcal{U}_R$ & $\mathcal{U}_N$&$ \mathcal{U}_M$ & $\mathcal{S}$ & $IGD$ \\
\hline
Lam\'e 0.25& $\tau = 0$ & \cellcolor{g}2.33e-02 & 1.00e+10 & 2.41e+00 & 4.86e-01 &\cellcolor{g} 9.69e-01 & 1.31e-01 \\
\cline{2-8}
 	& Riesz & 8.74e+00 &\cellcolor{g} 5.65e+00 & -1.94e-01 & 9.62e-02 & 7.77e-01 & 4.06e-02 \\
\cline{2-8}
 	& Newtonian & 1.11e+01 & 8.23e+00 & -3.53e-01 & 1.14e-01 & 8.38e-01 & 4.25e-02 \\
\cline{2-8}
 	& Morse & 1.49e+01 & 1.81e+04 &\cellcolor{g} -1.36e+00 & \cellcolor{g}3.40e-02 & 7.45e-01 & \cellcolor{g}2.64e-02 \\
\hline
\hline
Lam\'e 1& $\tau = 0$ & \cellcolor{g}9.88e-02 & 9.60e+09 & 9.96e-01 & 1.26e-01 & 3.74e-01 & 8.28e-02 \\
\cline{2-8}
 	& Riesz & 1.63e-01 & \cellcolor{g}6.54e+00 & 8.57e-01 & 1.23e-01 & 4.59e-01 & \cellcolor{g}1.56e-02 \\
\cline{2-8}
 	&Newtonian & 9.81e-01 & 8.39e+00 & 5.77e-01 & 9.47e-02 & \cellcolor{g}4.62e-01 & 1.91e-02 \\
\cline{2-8}
 	&Morse & 6.83e-01 & 8.97e+05 & \cellcolor{g}4.41e-01 &\cellcolor{g} 7.95e-02 & 4.48e-01 & 1.78e-02 \\
\hline
\hline
Lam\'e 3& $\tau = 0$ &\cellcolor{g}1.93e-02 & 8.40e+09 & 9.56e-01 & 1.30e-01 & 8.45e-02 & 2.18e-02 \\
\cline{2-8}
 	& Riesz & 5.64e-02 & 7.06e+00 & 7.68e-01 & 1.14e-01 & 1.01e-01 & 1.32e-02 \\
\cline{2-8}
 	& Newtonian & 2.34e-01 & \cellcolor{g}6.33e+00 & 5.74e-01 & 9.10e-02 & \cellcolor{g}1.03e-01 & \cellcolor{g}1.11e-02 \\
\cline{2-8}
 	&Morse & 3.02e-01 & 9.57e+06 & \cellcolor{g}5.04e-01 & \cellcolor{g}7.84e-02 & 1.02e-01 & 1.29e-02 \\
\hline
\hline
DO2DK & $\tau = 0$ & 1.80e-01 & 1.00e+10 & -3.30e-01 & 6.94e-02 & 8.84e+01 & 2.82e-01 \\
\cline{2-8}
k=2,s=1  & Riesz & \cellcolor{g}5.03e-02 &\cellcolor{g} 1.58e+00 & \cellcolor{g}-6.04e-01 & 4.18e-02 & \cellcolor{g}8.94e+01 & 1.18e-01 \\
\cline{2-8}
 	& Newtonian & 6.48e-02 & 1.77e+01 & -5.98e-01 & 3.85e-02 & 8.94e+01 & \cellcolor{g}1.07e-01 \\
\cline{2-8}
 	& Morse & 9.59e-02 & 7.67e+08 & -5.64e-01 &\cellcolor{g}3.75e-02 & 8.94e+01 & 9.33e-02 \\
\hline
\hline
DO2DK &$\tau = 0$ & \cellcolor{g}6.60e-02 & 1.00e+10 & 2.69e+00 & 2.64e-01 & \cellcolor{g}8.66e+01 & 1.36e-01 \\
\cline{2-8}
k=4, s=2& Riesz & 8.95e-01 & \cellcolor{g}3.34e+00 & -1.27e-01 & \cellcolor{g}7.08e-02 & 8.40e+01 & \cellcolor{g}2.61e-02 \\
\cline{2-8}
 	& Newtonian & 1.50e+00 & 2.18e+01 &\cellcolor{g} -2.26e-01 & 7.52e-02 & 8.44e+01 & 3.61e-02 \\
\cline{2-8}
 	&Morse & 9.85e+00 & 1.94e+09 & -1.56e-01 & 9.09e-02 & 7.63e+01 & 3.45e-02 \\
\hline
\end{tabular}
\caption{Algorithm performance for the different settings and problems. Results are averaged over 25 runs.}
\label{table:results}
\end{table}

\cref{table:results} reports the performance metrics for all the problems. For most problems, the strategy $\tau=0$, with no interaction in $\Omega$ allows to reach lower values of $GD$. This is consistent with the analytical results \cref{t:CBO} and \cref{r:tau0}, which suggested that the additional dynamics may interfere with the CBO mechanism and, as a consequence, slow down the convergence towards optimal EP points. If  the diversity metrics $\mathcal{U}_R$, $\mathcal{U}_N$, $\mathcal{U}_M$ and $\mathcal{S}$ are considered, dynamics including interaction of parameters  allow to obtain more diverse solutions. Interestingly, using Morse binary potential in the interaction leads to a final lower Newtonian energy in some cases. We will  investigate the role of the potential choice and $\tau$ in the next section.

In \cref{fig:1} the  $IGD$ performance  shows that letting particles interact in parameter space improves the overall quality of the solution. While the improvement is more substantial in problems with complex Pareto fronts (see for instance Lamé $\gamma = 0.25$, or DO2DK $k=2$), we remark that the additional mechanism allows to obtain better solutions. This is even true, if  the parameter distribution is already optimal form the beginning (see Lamé $\gamma = 1$). We conjecture that this due to the additional stochasticity introduced by the potential. We will also study this aspect in the next subsection.

 \cref{fig:2a,fig:2b} show the time evolution of $GD$, $\mathcal{U}_R$, $\mathcal{U}_N$, $\mathcal{U}_M$ and $IGD$ for two of the considered test problems. As suggested by the analysis of the mean-field model, in particular \cref{t:CBO}, $GD$ exponentially decays up to a maximum accuracy within the first iterations of the algorithm. This is due to the $CBO$ dynamics driving the particles around EP optimal points. At the same time, the potential energies increase as the particles are concentrating towards the front in the image-space. Another consequence of \cref{t:CBO} is that assumption \eqref{eq:f} is fulfilled and consequently the gradient-flow description \eqref{eq:red2}  is valid. This is also observed in \cref{fig:2a,fig:2b} where the potentials start decreasing provided that relatively low $GD$ values are attained.

\begin{figure}
\centering
\begin{subfigure}{0.91\linewidth}
\centering
\includegraphics[trim = 0cm 0cm 0cm 0cm , clip, width=1\linewidth]{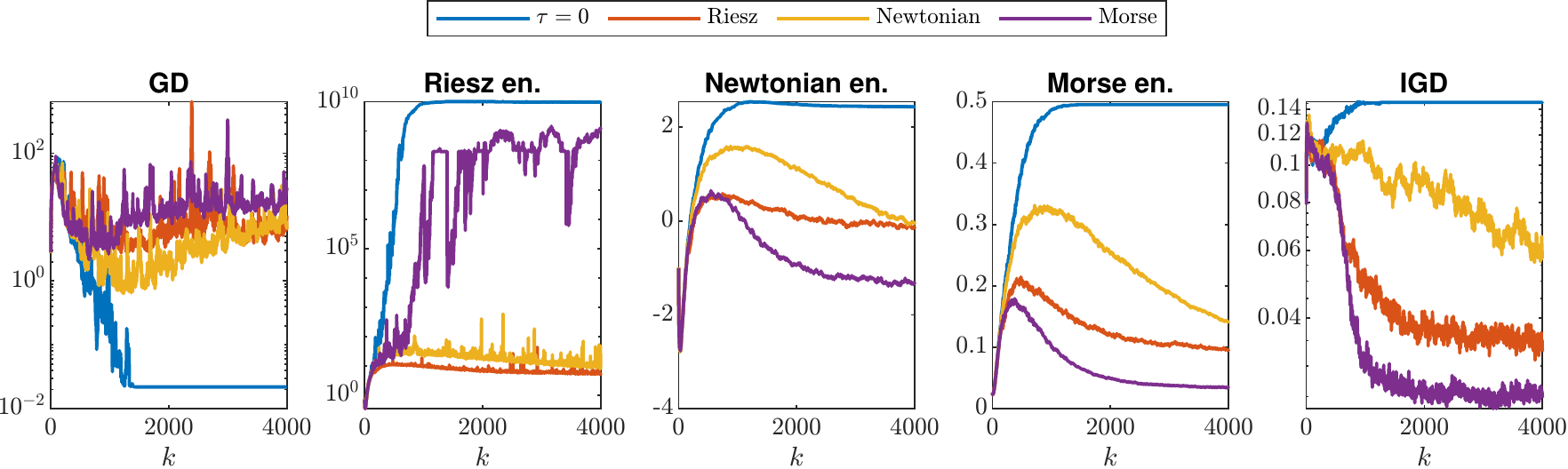}
\caption{Problem Lamé $\gamma = 0.25$}
\label{fig:2a}
\end{subfigure}
\begin{subfigure}{0.91\linewidth}
\centering
\includegraphics[trim = 0cm 0cm 0cm 0.5cm , clip, width=1\linewidth]{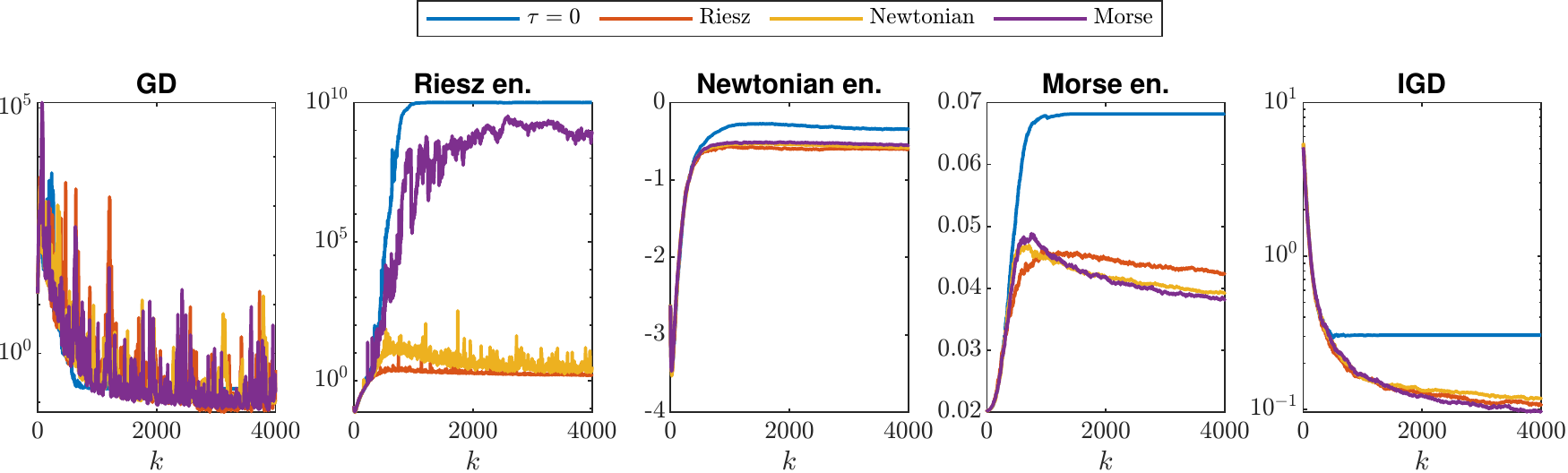}
\caption{Problem DO2DK $k=2, s=1$}
\label{fig:2b}
\end{subfigure}
\caption{Performance metric evolution, results are averaged over 25 runs.}
\label{fig:2}
\end{figure}

\subsection{Effect of the parameter $\tau$ and scalability}

By looking at the computational results, it becomes clear that the two phases of the algorithm, the one characterized by the CBO dynamics and the one characterized by the gradient-flows dynamics, have different scales. Typically, the former dynamics is much slower compared with the second one. This was consistent with assumptions to \cref{t:CBO}, where $\tau$ needs to be taken of order $o(\sqrt{\varepsilon})$. 

\begin{figure}
\centering
\begin{subfigure}{1\linewidth}
\includegraphics[trim = 0cm 0cm 0cm 0cm , clip, width=1\linewidth]{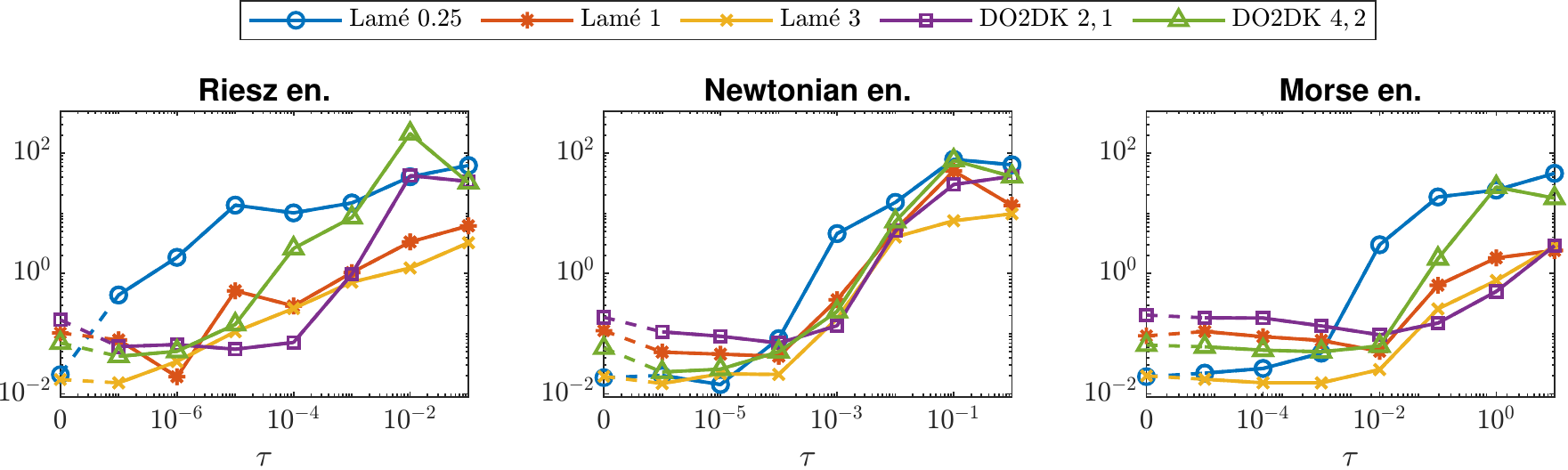}
\caption{Generational Distance (GD) \eqref{eq:GDnum}}
\label{fig:3a}
\end{subfigure}
\begin{subfigure}{1\linewidth}
\centering
\includegraphics[trim = 0cm 0cm 0cm 0.5cm , clip, width=1\linewidth]{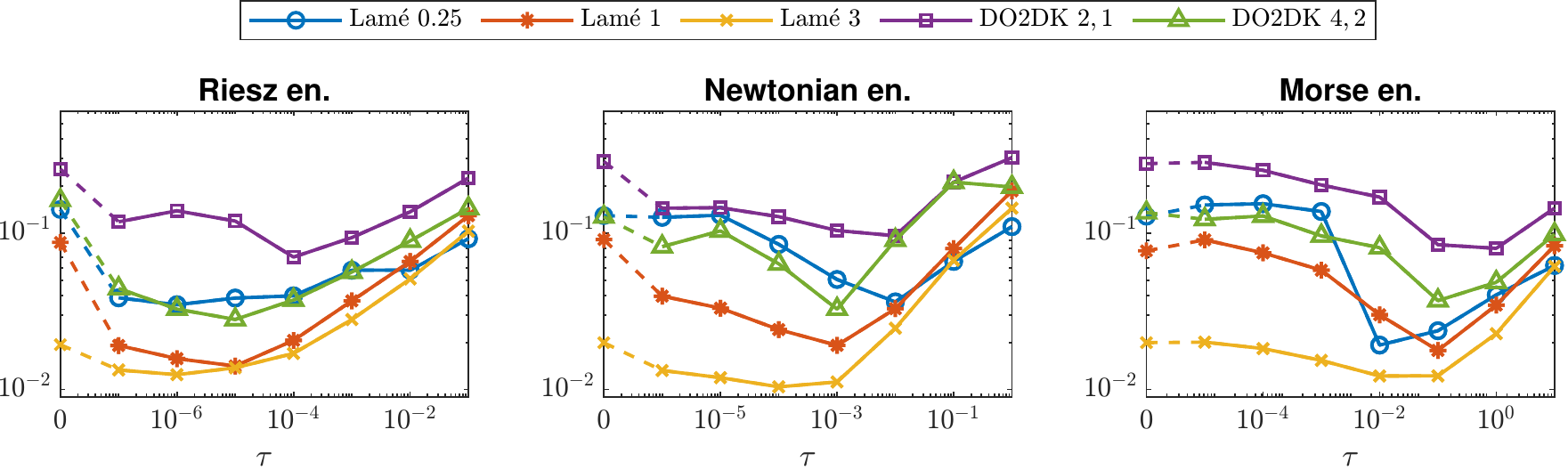}
\caption{Inverted Generational Distance (IGD) \eqref{eq:IGDnum}}
\label{fig:3b}
\end{subfigure}
\caption{Performance metrics as a function of $\tau$ for all the problems considered. Results are averaged over 10 runs.}
\label{fig:3}
\end{figure}

To experimentally investigate the importance, we test the algorithm for various values of $\tau$, keeping the remaining parameters fixed. \cref{fig:3a,fig:3b} show the final $GD$ and $IGD$ metrics when different binary potential are used during the computation.
As expected, relatively large values of $\tau$ lead to a strong interaction in parameter space that  interferes with the CBO mechanism. As a result, the $GD$ metric increases for large values of $\tau$. Interestingly, the lowest $GD$ values are not always attained for the smallest values of $\tau$, suggesting that the additional weights vectors dynamics might help the CBO mechanism in optimizing the sub-problems.

The $IGD$ metrics in \cref{fig:3b},shows that the optimal value of $\tau$ is different for each test case. In particular, DO2DK problems benefit from a strong interaction in parameter space. This might be explained by the front geometry (\cref{fig:1}):  the front length is long and, as consequence, the particles tend to be further apart in the image space, making the binary potential interaction weaker. Larger values of $\tau$ mitigate this effect, leading to better algorithm performances. If the extrema of the Pareto front are known in advance, one could address this issue by estimating the front length and choosing the parameter $\tau$ accordingly. We also note that algorithm seems to perform better when the Morse potential is used during the computation.

As already mentioned, the dynamics in $\Omega$ adds stochasticity to the particles position evolution. Hence,  the additional diffusive term $\sigma D_k^iB_k^i$ in \eqref{eq:iterx} might not be necessary. Yet,  taking $\sigma = 0$ yields  poor approximations of the Pareto front, see \cref{fig:4a}, suggesting that the diffusive term is still of paramount importance for the particles exploration behavior and their statistical independence.
From \cref{fig:4b}, it is obvious that the optimal diffusion parameter $\sigma$ is larger, the smaller $\tau$ is. In particular, if $\tau=0$ the particles diverge from the optimal EP points only when $\sigma>10$, which is consistent with other CBO methods for single-objective optimization, see for instance \cite{benfenati2021binary}. At the same time, for some problems, if $\sigma$ is too small, larger values of $\tau$ improve the convergence towards optimal points.

\begin{figure}
\centering

\begin{subfigure}{1.\linewidth}
\centering
\includegraphics[trim = 0cm 0cm 0cm 0cm , clip, width=1\linewidth]{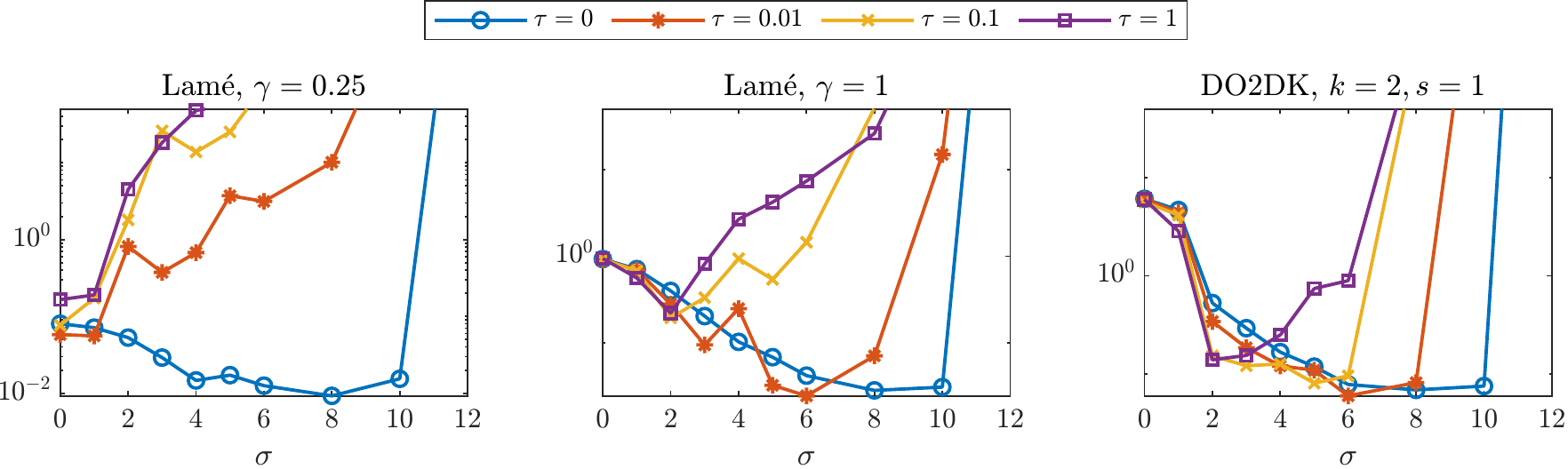}
\caption{$GD$}
\label{fig:4b}
\end{subfigure}
\begin{subfigure}{1.\linewidth}
\includegraphics[trim = 0cm 0cm 0cm 0.5cm , clip, width=1\linewidth]{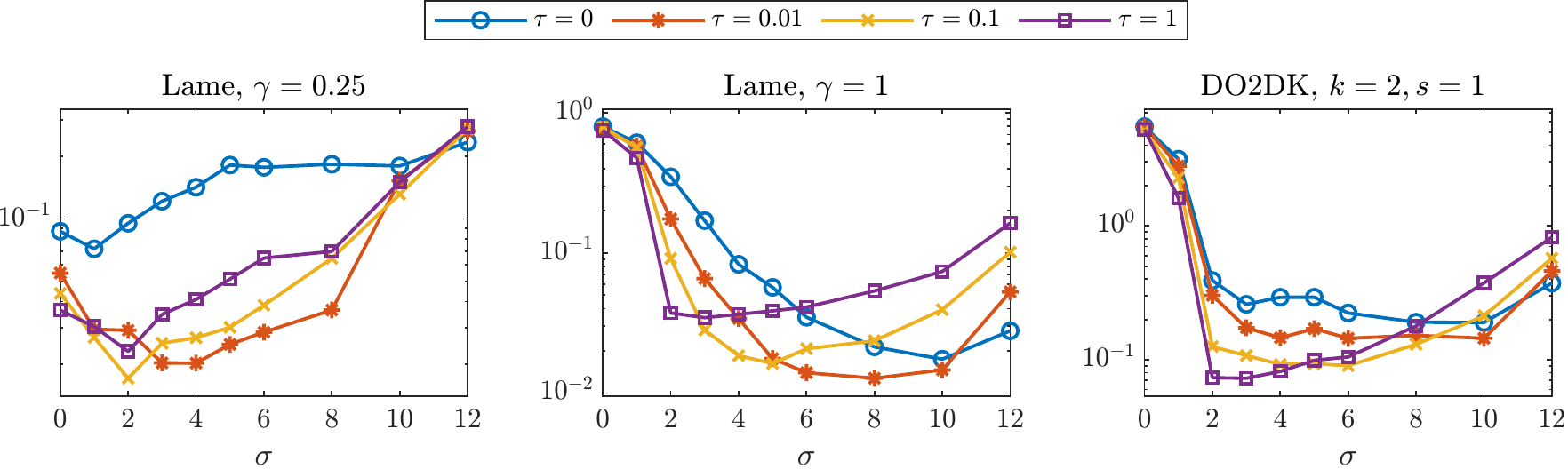}
\caption{$IGD$}
\label{fig:4a}
\end{subfigure}
\caption{Metrics as functions of $\sigma$, for different values of $\tau$. Morse interaction is used, results are averaged over 5 runs.}
\label{fig:4}
\end{figure}

Finally, we test the algorithm performance for different dimensions $d$ of the search space, keeping the same parameters choice. If the same number $N=100$ of particles are used, the $IGD$ of the computed solutions increases as the space dimension $d$ becomes larger, see \cref{fig:5}. This effect can be simply reduced by increasing the number of particles linearly with the space dimension, see \cref{fig:5}.

\begin{figure}
\centering
\includegraphics[trim = 0cm 0cm 0cm 0cm, clip, width=1\linewidth]{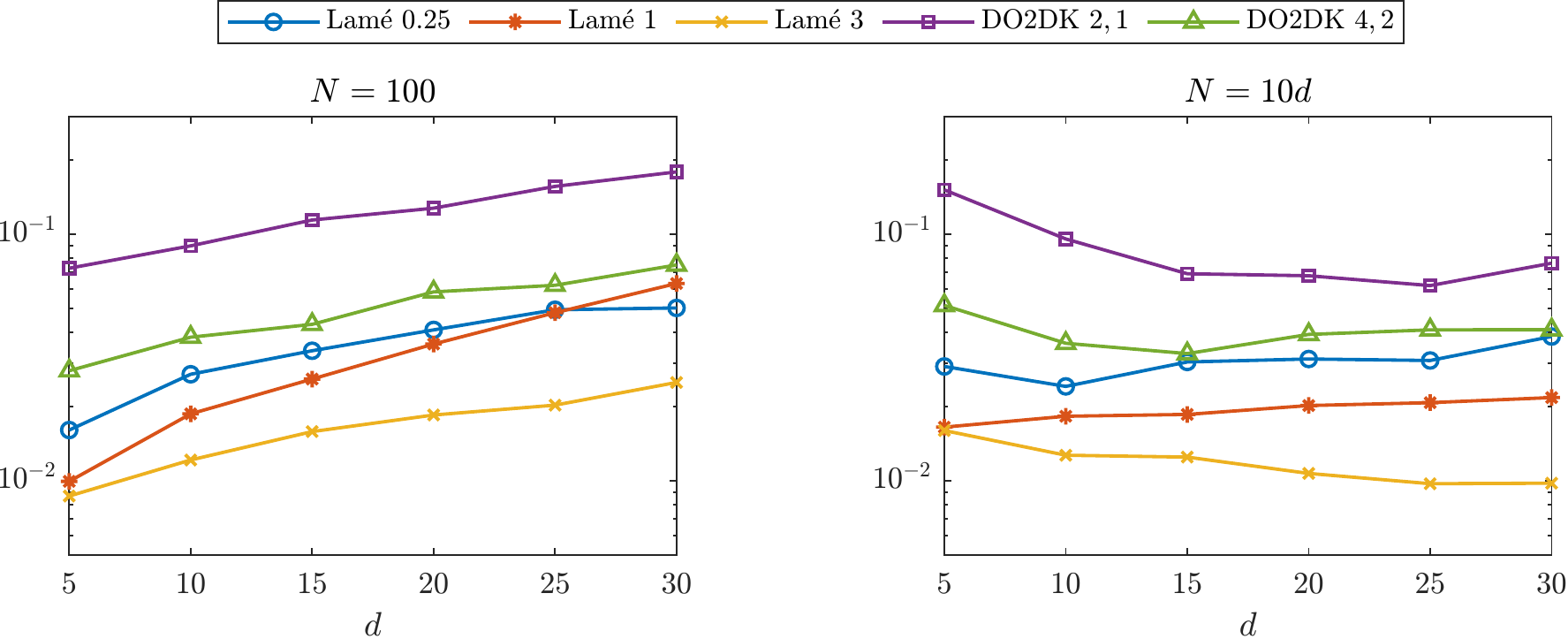}
\caption{$IGD$ metrics as functions of the search space dimension $d$. Morse interaction is used, results are averaged over 20 runs.}
\label{fig:5}
\end{figure}

\section{Conclusions}

In this work, we proposed an adaptive stochastic particle dynamics based on consensus to solve multi-objective optimization problems. The method makes use of a scalarization strategy the break down the original problem into $N$ parametrized single-objective sub-problems. 
The proposed algorithm, AM-CBO, extends prior work on multi-objective consensus based optimization by an additional adaptive dynamics in the  parameter space in order to  ensure that the particles distribute uniformly over the Pareto front. This is achieved by exploiting energy-based diversity measures. A rigorous mathematical analysis and numerical evidence are provided to validate this behavior. We theoretically investigated the long time behavior of the particle dynamics under the propagation of chaos assumption and establish convergence towards optimal points.  Indeed, under appropriate assumptions, the particles are capable of solving several single-objective problems at the same time, with a remarkable save of computational cost with the respect to a naive  approach. The additional dynamics on the parameter space is also analyzed based on  results on non-linear aggregation equations. Numerical experiments show that  the proposed method is capable to solve multi-objective problems with very different  Pareto fronts. The algorithm scales well with the problem dimension, even when using a relatively small number of particles.

\appendix
\section{Construction of reference solutions}
\label{app:1}
Even if we assume there exists an analytical representation of the Pareto front $F$, finding an $M$-approximation of $F$ which also minimizes a given two-body potential is a computationally expensive task, which is related to the already mentioned crystallization problem in physics \cite{blanc2015crystal}. In \cite{braun2015preference}, this was achieved by using mathematically programming techniques, while in \cite{coello2020reference} the authors proposed the following heuristic strategy: generate $N \gg M$ points on the front and
iteratively delete the point subject to the highest potential energy until only $M$ are left. In this appendix, we propose a different heuristic strategy which not only generates low-energy approximations of the front, but also provide more insight into the choice of the proposed update strategy \eqref{eq:iterw}.

In the following, we assume $F$ to be a $(m-1)$-dimensional manifold with known chart $\bar g \in \mathcal{C}^2(V, F)$
\[
F = \{ \bar g(z) \, |\, z \in V\}\,,
\]
where $V = [0,1]^{m-1}$ or $V = \Omega$. We also assume the tangential space $T(y, F)$ to be well-defined for all $y \in F$. Let $\{G_t^i \}_{i=1}^M \subset F$
describe the positions at time $t\geq 0$ of $M$ particles interacting over the front under a potential $U$, that is
\be
\frac{d}{dt}G_t^i = P_{T(G_t^i, F)} \left ( -\frac1M \sum_{j=1}^M \nabla U (G_t^i - G_t^j)  \right) \,,
\label{eq:a1}
\ee
with some given initial conditions $G_0^i = \bar g(Z_0^i)$, for all $i = 1, \dots, M$. 
Our heuristic strategy is based on the conjecture that, as $t \to \infty$, the system will eventually converge towards a low-energy configuration. Rather then solving \eqref{eq:a1} in $\RR^m$ where $F$ is embedded, we consider the equivalent
system for the coordinates $\{Z_t^i\}_{i=1}^M$,
\be
\frac{d}{dt}Z_t^i = \left( D \bar g(Z_t^i)\right)^+ P_{T(G_t^i, F)} \left ( -\frac1M \sum_{j=1}^M \nabla U (G_t^i - G_t^j) \right)\,,
\label{eq:a2}
\ee
where $G_t^i = \bar g(Z_t^i)$ and $(\cdot)^+$ is the pseudo-inverse of $(\cdot)$, see \cite[Chapter 5]{hairer2006geom}. We note that if $G_t^i$ belongs to the extrema of $F$ (or its ``contour'' when $m>2$), $D\bar g(Z_t^i)$ might not be well-defined. In this case, though, the projection is the null map so we set $dZ_t^i/dt = 0$. System \eqref{eq:a2} can then be solved numerically if $\bar g$ is explicitly known. The reference, low-energy, solution to \eqref{eq:mop} will then consist on the final configuration $\{G_T^i\}_{i=1}^M$ reached at certain time horizon $T>0$.

We note that dynamics \eqref{eq:sdew} introduced in the parameters space $\Omega$ can be seen as an approximation to \eqref{eq:a2} when $m=2$. Indeed, let $V = \Omega$ and the chart $\bar g$ be the relation (given by \cref{t:pareto}) between parameters and points on $F$
\[
\bar g(w) = \underset{x \in \RR^d}{\textup{argmin}} \,G(x,w)\,.
\]
As $\bar g$ and $T(y,F)$ in \eqref{eq:a2} are unknown during the optimization process, one could approximate them by assuming linearity on $F$. In particular, if no further information on the front geometry is available, let us take $F = \Omega$ as in \cref{a:4}. This leads to 
\[
\bar g(w) \approx A w, \quad Dg(w) \approx A \quad \textup{where}\quad  A = \begin{pmatrix}
0 & 1\\
1 & 0 
\end{pmatrix}\,,
\] 
as before in \cref{l:1}, and
\[ P_{T(\bar g(w), F)} \approx P_{T(Aw,\Omega)} = \begin{pmatrix}
1 & -1\\
-1 & 1 
\end{pmatrix} = :B
\]
if $w>0$ component-wise, and $P_{T(Aw,\Omega)} = 0$ otherwise. Starting from \eqref{eq:a2}, it follows
\be
\frac{d}{dt}W_t^i = \left( D \bar g(W_t^i)\right)^+ P_{T(G_t^i, F)} \left ( -\frac1M \sum_{j=1}^M \nabla U (G_t^i - G_t^j) \right)
 \approx A^+ B \left(  - \frac1{M}\sum_{j=1}^M \nabla U (G_t^i - G_t^j)\right) \,.
 \notag
 \ee
Now, since $A^+ = A$ and $AB = -B$ we obtain
\be
\frac{d}{dt}W_t^i \approx - P_{T(w,\Omega)} \left ( -\frac1M \sum_{j=1}^M \nabla U (G_t^i - G_t^j) \right)
\label{eq:a3}
\ee
which corresponds to the dynamics proposed in \cref{sec:3}, provided $G_t^i \approx g(X_t^i)$.

To conclude, we remark that the above approximation has a mild impact on the final distribution over the front, even when $F$ differs substantially from $\Omega$, see \cref{fig:6}.

\begin{figure}
\begin{subfigure}{0.49\linewidth}
\centering
\includegraphics[trim = 0cm 0cm 0cm 0cm , clip, width=1\linewidth]{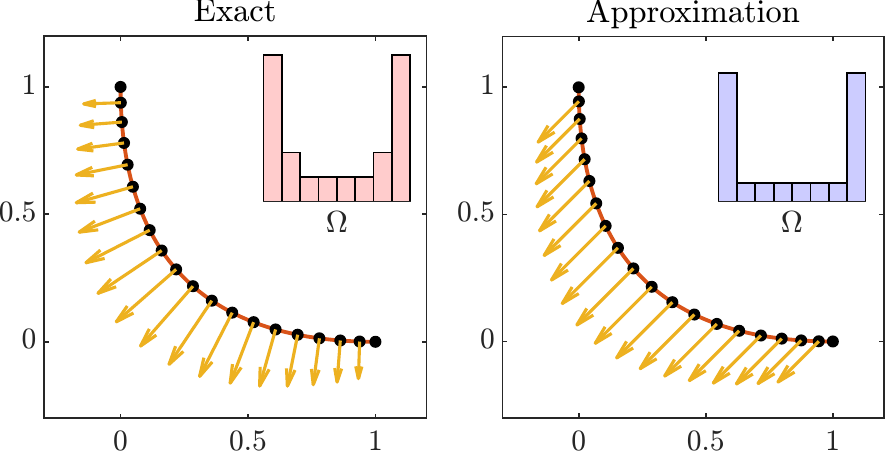}
\caption{Problem Lamé $\gamma = 0.5$}
\label{fig:4b}
\end{subfigure}
\hfill
\begin{subfigure}{0.49\linewidth}
\includegraphics[trim = 0cm 0cm 0cm 0cm , clip, width=1\linewidth]{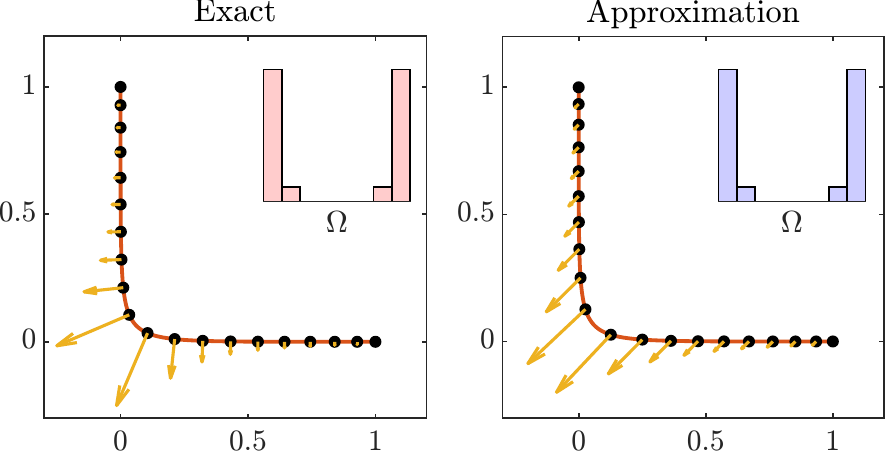}
\caption{Problem Lamé $\gamma = 0.25$}
\label{fig:4a}
\end{subfigure}
\caption{Final configuration of $M = 20$ particles evolved according to \eqref{eq:a2} and \eqref{eq:a3} under Riesz binary potential \eqref{eq:riesz}. Arrows show the total potential forces the particles are subject to. Two different front shapes are considered. The particles systems are solved with an explicit Euler scheme, $\Delta t = 10^{-8}, T = 0.01$. Histograms show the final distribution over the coordinate space $V=\Omega$.}
\label{fig:6}
\end{figure}

\section{Problem definition}
\label{app:2}

We report here the problems definition, together with the penalization strategy and known
parametrization of $F$.
The Lamé \cite{emmerich2007lame} and the DO2DK \cite{branke2004finding} problems are originally formulated as constrained multi-objective optimization problems where the feasible domain is given by $\mathcal{H} = [0,1]^d$. Moreover the set of EP optimal points corresponds the edge $[0,1]\times\{0\}^{d-1}$. Adding a projection step to $\mathcal{H}$ has a relevant impact on the algorithm dynamics, as any point belonging to the cone $\RR \times \RR^{d-1}_{\leq0}$ is projected to an EP optimal point. Therefore, we make use of an exact penalization strategy to ensure the particles remain the feasible region adding a $\ell_1$-penalty term of the form $\beta \dist(x, \mathcal{H})$, $\beta>0$, to the original objective functions.

Let $x \in \RR^d, x = (x_1, \dots, x_d)$ for $d\geq 1$, the objective functions are given by
\begin{itemize}
\item Lamé \cite{emmerich2007lame} with $\gamma \in \RR_{>0}$, 
\be
\begin{split}
g_1(x) &= \left | \cos\left(\frac\pi2 x_1 \right) \right|^{\frac2\gamma} \left( 1+ r(x) \right) + \frac\pi\gamma \textup{dist}(x, \mathcal{H}) \\
g_2(x) &= \left | \sin\left(\frac\pi2 x_1 \right)\cos\left(\frac\pi2 x_2 \right) \right|^{\frac2\gamma} \left( 1+ r(x) \right) + \frac\pi\gamma \textup{dist}(x, \mathcal{H})\\
\end{split}
\ee
with
$r(x) = \sqrt{\sum_{i=2}^d x_i^2} $.
\item DO2DK \cite{branke2004finding} with $k \in \mathbb{N}, s \in \RR_{>0}$
\be
\begin{split}
g_1(x) &=\sin \left(\frac\pi2 x_1 + \left(1+ \frac{2^s -1}{2^{s+2}} \right) \pi +1 \right)r_a(x) r_b(x) + 10\textup{dist}(x, \mathcal{H}) \\
g_2(x) & = \left( \cos\left(\frac \pi 2 x_1 +\pi \right)+1 \right) r_a(x) r_b(x)  + 10\textup{dist}(x, \mathcal{H})
\end{split}
\ee
with
\begin{align*}
r_a(x) &= 1 + \frac{9}{d-1}\sum_{i=2}^d x_i \\
r_b(x) &= 5 + 10\left(x_1 -\frac12\right)^2 + \frac{2^{\frac s2}\cos(2k\pi x_1)}k \,.
\end{align*}

\end{itemize}
The parametrization used to construct reference solutions is given by
\[
h:  [0,1] \rightarrow F \, , \quad h(r) = g\left( (r,0, \dots,0)\right) \in F\,.
\]

\bigskip
{\small {\bf Acknowledgments}
This work has been written within the
activities of GNCS group of INdAM (National Institute of
High Mathematics). L.P. acknowledges the partial support of MIUR-PRIN Project 2017, No. 2017KKJP4X “Innovative numerical methods for evolutionary partial differential equations and applications”. 
The work of G.B. is funded by the Deutsche Forschungsgemeinschaft (DFG, German Research Foundation) – Projektnummer 320021702/GRK2326 – Energy, Entropy, and Dissipative Dynamics (EDDy). 
M.H.  thanks the Deutsche Forschungsgemeinschaft (DFG, German Research Foundation) for the financial support through 320021702/GRK2326,  333849990/IRTG-2379, CRC1481, HE5386/18-1,19-2,22-1,23-1, ERS SFDdM035 and under Germany’s Excellence Strategy EXC-2023 Internet of Production 390621612 and under the Excellence Strategy of the Federal Government and the Länder. The authors acknowledge the support of the Banff International Research Station (BIRS) for the Focused Research Group [22frg198] “Novel perspectives in kinetic equations for emerging phenomena”, July 17-24, 2022, where part of this work was done.
}

\bibliographystyle{abbrv}
\bibliography{bibfile}

\begin{thebibliography}{10}

\bibitem{AlPa}
G.~Albi and L.~Pareschi.
\newblock Binary interaction algorithms for the simulation of flocking and
  swarming dynamics.
\newblock {\em Multiscale Modeling \& Simulation}, 11(1):1--29, 2013.

\bibitem{benfenati2021binary}
A.~Benfenati, G.~Borghi, and L.~Pareschi.
\newblock Binary interaction methods for high dimensional global optimization
  and machine learning.
\newblock {\em Applied Mathematics \& Optimization}, 86(1):9, June 2022.

\bibitem{fonseca2009hyp}
N.~Beume, C.~M. Fonseca, M.~Lopez-Ibanez, L.~Paquete, and J.~Vahrenhold.
\newblock On the complexity of computing the hypervolume indicator.
\newblock {\em IEEE Transactions on Evolutionary Computation},
  13(5):1075--1082, 2009.

\bibitem{blanc2015crystal}
X.~Blanc and M.~Lewin.
\newblock {The Crystallization Conjecture: A Review}.
\newblock {\em {EMS Surveys in Mathematical Sciences}}, 2(2):255--306, 2015.
\newblock Final version to appear in EMS Surv. Math. Sci.

\bibitem{borghi2021constrained}
G.~Borghi, M.~Herty, and L.~Pareschi.
\newblock Constrained consensus-based optimization.
\newblock {\em arXiv:2111.10571}, 2021.

\bibitem{borghi2022multi}
G.~Borghi, M.~Herty, and L.~Pareschi.
\newblock A consensus-based algorithm for multi-objective optimization and its
  mean-field description.
\newblock {\em Proceedings of, the 61st IEEE Conference on Decision and
  Control, to appear}, 2022.

\bibitem{branke2004finding}
J.~Branke, K.~Deb, H.~Dierolf, and M.~Osswald.
\newblock Finding knees in multi-objective optimization.
\newblock In X.~Yao, E.~K. Burke, J.~A. Lozano, J.~Smith, J.~J.
  Merelo-Guerv{\'o}s, J.~A. Bullinaria, J.~E. Rowe, P.~Ti{\v{n}}o,
  A.~Kab{\'a}n, and H.-P. Schwefel, editors, {\em Parallel Problem Solving from
  Nature - PPSN VIII}, pages 722--731, Berlin, Heidelberg, 2004. Springer
  Berlin Heidelberg.

\bibitem{Braun2018thesis}
M.~A. Braun.
\newblock {\em Scalarized Preferences in Multi-objective Optimization}.
\newblock PhD thesis, Karlsruher Institut für Technologie (KIT), 2018.

\bibitem{braun2015preference}
M.~A. Braun, P.~K. Shukla, and H.~Schmeck.
\newblock Obtaining optimal pareto front approximations using scalarized
  preference information.
\newblock In {\em Proceedings of the 2015 Annual Conference on Genetic and
  Evolutionary Computation}, GECCO '15, page 631–638, New York, NY, USA,
  2015. Association for Computing Machinery.

\bibitem{carrillo2018analytical}
J.~A. Carrillo, Y.-P. Choi, C.~Totzeck, and O.~Tse.
\newblock An analytical framework for consensus-based global optimization
  method.
\newblock {\em Math. Models Methods Appl. Sci.}, 28(6):1037--1066, 2018.

\bibitem{carrillo2019consensus}
J.~A. Carrillo, S.~Jin, L.~Li, and Y.~Zhu.
\newblock A consensus-based global optimization method for high dimensional
  machine learning problems.
\newblock {\em ESAIM: COCV}, 27:S5, 2021.

\bibitem{carrillo2016nonlocal}
J.~A. Carrillo, D.~Slepčev, and L.~Wu.
\newblock Nonlocal-interaction equations on uniformly prox-regular sets.
\newblock {\em Discrete and Continuous Dynamical Systems}, 36(3):1209--1247,
  2016.

\bibitem{carrillo2021constrained}
J.~A. Carrillo, C.~Totzeck, and U.~Vaes.
\newblock Consensus-based optimization and ensemble {K}alman inversion for
  global optimization problems with constraints.
\newblock {\em arXiv:2111.02970}, 2021.

\bibitem{coello2020survey}
C.~A. Coello~Coello, S.~Gonz{\'a}lez~Brambila, J.~Figueroa~Gamboa, M.~G.
  Castillo~Tapia, and R.~Hern{\'a}ndez~G{\'o}mez.
\newblock Evolutionary multiobjective optimization: open research areas and
  some challenges lying ahead.
\newblock {\em Complex {\&} Intelligent Systems}, 6(2):221--236, Jul 2020.

\bibitem{deb2001multi}
K.~Deb.
\newblock {\em Multi-Objective Optimization using Evolutionary Algorithms}.
\newblock John Wiley \& Sons, 2001.

\bibitem{deb2002nsga2}
K.~Deb, A.~Pratap, S.~Agarwal, and T.~Meyarivan.
\newblock A fast and elitist multiobjective genetic algorithm: {NSGA-II}.
\newblock {\em Trans. Evol. Comp}, 6(2):182–197, apr 2002.

\bibitem{defrli13}
P.~Degond, A.~Frouvelle, and J.-G. Liu.
\newblock Phase transitions, hysteresis, and hyperbolicity for self-organized
  alignment dynamics.
\newblock {\em Archive for Rational Mechanics and Analysis}, 216(1):63--115,
  2015.

\bibitem{Dembo2010}
A.~Dembo and O.~Zeitouni.
\newblock {\em Large Deviations Techniques and Applications}.
\newblock Springer, Berlin, Heidelberg, 2010.

\bibitem{book2005mop}
M.~Ehrgott.
\newblock {\em Multicriteria Optimization}.
\newblock Springer, Berlin, Heidelberg, 2005.

\bibitem{eichfelder2021twenty}
G.~Eichfelder.
\newblock Twenty years of continuous multiobjective optimization in the
  twenty-first century.
\newblock {\em EURO Journal on Computational Optimization}, 9:100014, 2021.

\bibitem{emmerich2005emo}
M.~Emmerich, N.~Beume, and B.~Naujoks.
\newblock An emo algorithm using the hypervolume measure as selection
  criterion.
\newblock In C.~A. Coello~Coello, A.~Hern{\'a}ndez~Aguirre, and E.~Zitzler,
  editors, {\em Evolutionary Multi-Criterion Optimization}, pages 62--76,
  Berlin, Heidelberg, 2005. Springer Berlin Heidelberg.

\bibitem{emmerich2007lame}
M.~T.~M. Emmerich and A.~H. Deutz.
\newblock Test problems based on lam\'{e} superspheres.
\newblock In {\em Proceedings of the 4th International Conference on
  Evolutionary Multi-Criterion Optimization}, EMO'07, page 922–936, Berlin,
  Heidelberg, 2007. Springer-Verlag.

\bibitem{coello2021overview}
J.~G. Falc{\'o}n-Cardona, E.~Covantes~Osuna, and C.~A. Coello~Coello.
\newblock An overview of pair-potential functions for multi-objective
  optimization.
\newblock In H.~Ishibuchi, Q.~Zhang, R.~Cheng, K.~Li, H.~Li, H.~Wang, and
  A.~Zhou, editors, {\em Evolutionary Multi-Criterion Optimization}, pages
  401--412, Cham, 2021. Springer International Publishing.

\bibitem{coello2020reference}
J.~G. Falcón-Cardona, H.~Ishibuchi, and C.~A.~C. Coello.
\newblock Riesz s-energy-based reference sets for multi-objective optimization.
\newblock In {\em 2020 IEEE Congress on Evolutionary Computation (CEC)}, pages
  1--8, 2020.

\bibitem{fatecau2017swarm}
R.~C. Fetecau and M.~Kovacic.
\newblock Swarm equilibria in domains with boundaries.
\newblock {\em SIAM J. Appl. Dyn. Syst.}, 16:1260--1308, 2017.

\bibitem{fatecau2019diffusion}
R.~C. Fetecau, M.~Kovacic, and I.~Topaloglu.
\newblock Swarming in domains with boundaries: Approximation and regularization
  by nonlinear diffusion.
\newblock {\em Discrete and Continuous Dynamical Systems - B},
  24(4):1815--1842, 2019.

\bibitem{fhps20-2}
M.~Fornasier, H.~Huang, L.~Pareschi, and P.~S{\"u}nnen.
\newblock Consensus-based optimization on the sphere: Convergence to global
  minimizers and machine learning.
\newblock {\em J. Machine Learning Research}, 22(237):1--55, 2021.

\bibitem{fhps20-2b}
M.~Fornasier, H.~Huang, L.~Pareschi, and P.~S{\"u}nnen.
\newblock Anisotropic diffusion in consensus-based optimization on the sphere.
\newblock {\em SIAM J. Optimization, to appear}, 2022.

\bibitem{fornasier2020hypersurfaces}
M.~Fornasier, H.~Huang, L.~Pareschi, and P.~Sünnen.
\newblock Consensus-based optimization on hypersurfaces: Well-posedness and
  mean-field limit.
\newblock {\em Mathematical Models and Methods in Applied Sciences},
  30(14):2725--2751, 2020.

\bibitem{fornasier2021consensusbased}
M.~Fornasier, T.~Klock, and K.~Riedl.
\newblock Consensus-based optimization methods converge globally in mean-field
  law.
\newblock {\em arXiv:2103.15130}, 2021.

\bibitem{fornasier2022aniso}
M.~Fornasier, T.~Klock, and K.~Riedl.
\newblock Convergence of anisotropic consensus-based optimization in mean-field
  law.
\newblock In J.~L. Jim{\'e}nez~Laredo, J.~I. Hidalgo, and K.~O. Babaagba,
  editors, {\em Applications of Evolutionary Computation}, pages 738--754,
  Cham, 2022. Springer International Publishing.

\bibitem{rosasco2017geometry}
G.~Garrigos, L.~Rosasco, and S.~Villa.
\newblock Convergence of the forward-backward algorithm: Beyond the worst case
  with the help of geometry.
\newblock {\em arXiv:1703.09477}, 2017.

\bibitem{grassi2021from}
S.~Grassi and L.~Pareschi.
\newblock From particle swarm optimization to consensus based optimization:
  Stochastic modeling and mean-field limit.
\newblock {\em Mathematical Models and Methods in Applied Sciences},
  31(08):1625--1657, 2021.

\bibitem{greengard1987fast}
L.~Greengard and V.~Rokhlin.
\newblock A fast algorithm for particle simulations.
\newblock {\em Journal of Computational Physics}, 73(2):325--348, 1987.

\bibitem{hairer2006geom}
E.~Hairer, C.~Lubich, and G.~Wanner.
\newblock {\em Geometric numerical integration}, volume~31 of {\em Springer
  Series in Computational Mathematics}.
\newblock Springer-Verlag, Berlin, second edition, 2006.
\newblock Structure-preserving algorithms for ordinary differential equations.

\bibitem{hardin2005minimal}
D.~Hardin and E.~Saff.
\newblock Minimal riesz energy point configurations for rectifiable
  d-dimensional manifolds.
\newblock {\em Advances in Mathematics}, 193(1):174--204, 2005.

\bibitem{huang2021meanfield}
H.~Huang and J.~Qiu.
\newblock On the mean-field limit for the consensus-based optimization.
\newblock {\em arXiv:2105.12919}, 2021.

\bibitem{huang2022PSO}
H.~Huang, J.~Qiu, and K.~Riedl.
\newblock On the global convergence of particle swarm optimization methods.
\newblock {\em arXiv:2201.12460}, 2022.

\bibitem{Hwang79}
C.-L. Hwang and A.~Md^^>Masud.
\newblock {\em Multiple objective decision making, methods and applications: a
  state-of-the-art survey}.
\newblock Springer-Verlag, 1979.

\bibitem{jahn2004vector}
J.~Jahn.
\newblock {\em Vector optimization - theory, applications, and extensions}.
\newblock Springer, Berlin, Heidelberg, 2004.

\bibitem{JLJ}
S.~Jin, L.~Li, and J.-G. Liu.
\newblock {R}andom {B}atch {M}ethods ({RBM}) for interacting particle systems.
\newblock {\em Journal of Computational Physics}, 400:108877, 2020.

\bibitem{vega2021towards}
L.~A. M{\'a}rquez-Vega, J.~G. Falc{\'o}n-Cardona, and E.~Covantes~Osuna.
\newblock Towards a pareto front shape invariant multi-objective evolutionary
  algorithm using pair-potential functions.
\newblock In I.~Batyrshin, A.~Gelbukh, and G.~Sidorov, editors, {\em Advances
  in Computational Intelligence}, pages 369--382, Cham, 2021. Springer
  International Publishing.

\bibitem{MR3274797}
S.~Motsch and E.~Tadmor.
\newblock Heterophilious dynamics enhances consensus.
\newblock {\em SIAM Rev.}, 56(4):577--621, 2014.

\bibitem{Prigogine1977self}
G.~Nicolis and I.~Prigogine.
\newblock {\em Self-organization in nonequilibrium systems}.
\newblock New York: John Wiley \& Sons, 1977.

\bibitem{Pardalos2018}
P.~M. Pardalos, A.~Žilinskas, and J.~Zilinskas.
\newblock {\em Non-Convex Multi-Objective Optimization}.
\newblock Springer International Publishing, Cham, 2018.

\bibitem{pareschi13}
L.~Pareschi and G.~Toscani.
\newblock {\em Interacting Multiagent Systems: Kinetic equations and Monte
  Carlo methods}.
\newblock Oxford University Press, 2013.

\bibitem{patacchini2022nonlocal}
F.~S. Patacchini and D.~Slep{\v c}ev.
\newblock {The Nonlocal-Interaction Equation Near Attracting Manifolds}.
\newblock {\em {Discrete and Continuous Dynamical Systems - Series A}},
  42(2):903--929, Feb. 2022.

\bibitem{pinnau2017consensus}
R.~Pinnau, C.~Totzeck, O.~Tse, and S.~Martin.
\newblock A consensus-based model for global optimization and its mean-field
  limit.
\newblock {\em Math. Models Methods Appl. Sci.}, 27(1):183--204, 2017.

\bibitem{Sergeyev2018}
Y.~D. Sergeyev, D.~E. Kvasov, and M.~S. Mukhametzhanov.
\newblock On the efficiency of nature-inspired metaheuristics in expensive
  global optimization with limited budget.
\newblock {\em Scientific Reports}, 8(1):453, 2018.

\bibitem{talbi2009meta}
E.-G. Talbi.
\newblock {\em Metaheuristics - From Design to Implementation.}
\newblock Wiley, 2009.

\bibitem{totzeck2020}
C.~Totzeck and M.-T. Wolfram.
\newblock Consensus-based global optimization with personal best.
\newblock {\em Mathematical Biosciences and Engineering}, 17(5):6026--6044,
  2020.

\bibitem{van1998evolutionary}
D.~A. Van~Veldhuizen, G.~B. Lamont, et~al.
\newblock Evolutionary computation and convergence to a pareto front.
\newblock In {\em Late breaking papers at the genetic programming 1998
  conference}, pages 221--228. Citeseer, 1998.

\bibitem{Vicseck}
T.~Vicsek, A.~Czirók, E.~Ben-Jacob, I.~Cohen, and O.~Shochet.
\newblock Novel type of phase transition in a system of self-driven particles.
\newblock {\em Physical Review Letters}, 75(6):1226--1229, 1995.

\bibitem{zhang2008moead}
Q.~Zhang and H.~Li.
\newblock {MOEA/D}: A multiobjective evolutionary algorithm based on
  decomposition.
\newblock {\em Evolutionary Computation, IEEE Transactions on}, 11:712 -- 731,
  01 2008.

\bibitem{zitzler1998multi}
E.~Zitzler and L.~Thiele.
\newblock Multiobjective optimization using evolutionary algorithms - a
  comparative case study.
\newblock In {\em Proceedings of the 5th International Conference on Parallel
  Problem Solving from Nature}, PPSN V, page 292–304, Berlin, Heidelberg,
  1998. Springer-Verlag.

\end{thebibliography}

\end{document}